\documentclass[11pt]{amsart}
\usepackage{amsmath,amssymb,amsthm,graphicx}

\usepackage[left=1in,right=1in,top=1in, bottom=1in]{geometry}
\usepackage{babel}
\usepackage[utf8]{inputenc}
\usepackage{geometry}
\usepackage{amsmath}
\usepackage{bm}
\usepackage{bbm}
\usepackage{mathtools} 
\usepackage{mathrsfs}
\usepackage{amsfonts}
\usepackage{amsthm}
\usepackage{enumitem}
\usepackage{amssymb}
\usepackage{esint}
\usepackage{array}
\usepackage{makecell}
\usepackage{enumitem}  
\usepackage{tabto}
\usepackage{fontawesome5}
\usepackage{pgfplots}
\pgfplotsset{compat=1.18}
\usepackage{comment}
\usepackage[square,numbers]{natbib}
\usepackage{hyperref}

\DeclareMathOperator{\R}{\mathbb{R}}
\DeclareMathOperator{\N}{\mathbb{N}}

\DeclareMathOperator{\E}{\mathbb{E}}
\DeclareMathOperator{\Sp}{\mathbb{S}}
\DeclareMathOperator{\Pb}{\mathbb{P}}
\DeclareMathOperator{\I}{\mathbbm{1}}
\DeclareMathOperator{\TZ}{\mathcal{Z}}
\DeclareMathOperator{\TL}{\mathcal{L}}

\DeclareMathOperator{\TE}{\mathcal{T}_{\cE}}

\DeclareMathOperator{\TX}{\mathcal{X}}
\DeclareMathOperator{\TY}{\mathcal{Y}}
\DeclareMathOperator{\cC}{\mathcal{C}}

\DeclareMathOperator{\cN}{\mathcal{N}}
\DeclareMathOperator{\cL}{\mathcal{L}}
\DeclareMathOperator{\cM}{\mathcal{M}}
\DeclareMathOperator{\cE}{\mathcal{E}}
\DeclareMathOperator{\cD}{\mathcal{D}}
\DeclareMathOperator{\cH}{\mathcal{H}}

\DeclareMathOperator{\cF}{\mathcal{F}}

\DeclareMathOperator{\cG}{\mathcal{G}}
\DeclareMathOperator{\cT}{\mathcal{T}}
\DeclareMathOperator{\taue}{\textit{t}_{\mathcal{E}}}

\DeclareMathOperator{\trace}{\text{tr}}
\DeclareMathOperator{\tra}{\top}

\newcommand{\parent}[1]{\left(#1\right)}
\newcommand{\norm}[1]{\left\lVert#1\right\rVert}
\newcommand{\inn}[1]{\langle#1\rangle}
\newcommand{\graf}[1]{\left\{#1\right\}}
\newcommand{\abs}[1]{\left\lvert#1\right\rvert}
\newcommand{\limn}{\lim_{n\to\infty}}

\newtheorem{thm}{Theorem}[section] 
\newtheorem{lemma}[thm]{Lemma}
\newtheorem{defi}[thm]{Definition}
\newtheorem{prop}[thm]{Proposition}
\newtheorem{cor}[thm]{Corollary}

\theoremstyle{remark}
\newtheorem{rem}[thm]{Remark}
\newtheorem{ex}[thm]{Example}
\numberwithin{equation}{section}

\makeatletter
\def\namedlabel#1#2{\begingroup  
    (#2)%
    \def\@currentlabel{#2}%
    \phantomsection\label{#1}\endgroup
}
\makeatother

\title{Time-changed reflected diffusions: subdiffusion, superdiffusion and explosion}

\keywords{Time-changed diffusions, oblique reflection,
inverse subordinators, anomalous diffusion,
moment asymptotics, generalized parabolic domains, explosion}

\subjclass[2020]{Primary 60K50, 60J60; Secondary 60G51, 60H10}

\author{Aleksandar Mijatovi\'c}
\address{Department of Statistics, University of Warwick, UK}
\email{a.mijatovic@warwick.ac.uk}
\author{Francesco Virgili}
\address{Department of Mathematics “Giuseppe Peano”, University of Turin, Turin, Italy}
\email{francesco.virgili@unito.it}

\begin{document}

\begin{abstract}
Motivated by semi-Markov models of anomalous diffusion, we study
obliquely reflected diffusions in generalized parabolic domains
under independent inverse-subordinator time changes.
We focus on infinite-mean subordinators, whose inverse clocks
can slow the superdiffusive growth induced by reflection.
For domains with cross-sectional radius asymptotic to
$a_\infty x^\beta$, where $-1<\beta<1$, we strengthen the
almost-sure growth laws of \cite{Mijatovic1} to obtain precise
second-moment asymptotics when the subordinator's Laplace
exponent is regularly varying at zero, and matching bounds
more generally. Inverse $\alpha$-stable clocks yield
mean-square growth of order $t^{2\alpha/(1+\beta)}$, giving
subdiffusive, diffusive, or superdiffusive regimes.
For $\beta<-1$, explosion persists almost surely, with finite
mean explosion time if and only if the subordinator has finite
mean. Supporting results establish strong existence, pathwise
uniqueness, and a time-change representation, also allowing
dependent clocks.
\end{abstract}

\date{\today}

\maketitle

\section{Introduction}

Inverse-subordinator time changes are widely used in semi-Markov
models to describe trapping and anomalous diffusion; see
\cite{biočić2026modellinganomalousdiffusionrole} and the references
therein. We study how this mechanism interacts with the
superdiffusive motion generated by oblique reflection in
generalized parabolic domains. 
\cite{Mijatovic1} establishes almost-sure growth laws for these
reflected diffusions and identifies a transition to finite-time
explosion when the domain narrows sufficiently rapidly.

Our main results describe the second-moment asymptotics after a time change via and
an independent inverse-subordinator. Particular
interest lies in infinite-mean subordinators, for which trapping
can change the growth exponent. For inverse stable clocks,
the interaction between the clock and the domain geometry
produces subdiffusive, diffusive, or superdiffusive mean-square
growth. We also determine how the time change affects explosion
and its expected time.

Let $\cD$ be a possibly unbounded closed $\cC^3$-domain in $\R^d$, and $\phi:\partial\cD\to\R^d$ a bounded $\cC^2$ vector field on the boundary of $\cD$, satisfying $\inf_{z\in\partial\cD}\inn{\phi(z),n(z)}>0$ where $n:\partial\cD\to\R^d$ is the inward normal vector field. Moreover assume that the maps $\mu:\cD\to\R^d$ and $\Sigma:\cD\to\cM^d_+$ 
are bounded and Lipschitz, and that the positive definite matrix $\Sigma$ is uniformly elliptic on $\cD$.  Denote by $\sigma:\cD\to\cM^d_+$ the positive definite square root of $\Sigma=
\sigma \sigma$.

Let a filtered probability space $(\Omega,\cF,\Pb)$ with filtration $\cH=(\cH_t)_{t\geq0}$ support a standard $\cH$-Brownian motion $W=(W_t)_{t\geq0}$ in $\R^d$, a continuous $\cH$-time change $T=(T_t)_{t\geq0}$
such that $T_t\to\infty$ a.s. as $t\to\infty$. Then the time-changed stochastic differential equation with reflection has a solution:
\begin{equation}\label{eq:TCRSDE}
    \TZ_{t}= z+\int_0^t \mu(\TZ_{s})dT_s+\int_0^t\sigma(\TZ_{s})dW_{T_s}+\int_0^t\phi(\TZ_s)d\TL_s,\quad\TL_t=\int_0^t\I(\TZ_s\in\partial\cD)d\TL_s.
\end{equation}
\begin{thm}\label{thm:existence and uniqueness}
    Let $\cD$ and $\phi$ satisfy \eqref{eq:domain exhaustion1}, then for any $z\in\cD$ there exists a strong solution $(\TZ,\TL,\TE)$ to \eqref{eq:TCRSDE} on the stochastic interval $[0,\TE)$, where $\TE$ is a possibly infinite stopping time, and pathwise uniqueness holds. Moreover
    \begin{equation}\label{eq:limit}
        \lim_{t\to\TE}\norm{\TZ_t}_{d}=\lim_{t\to\TE}\TL_t=\infty,\quad\text{on }\graf{\TE<\infty}.
    \end{equation}
\end{thm}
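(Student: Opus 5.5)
We reduce the time-changed problem to the untime-changed reflected SDE studied in \cite{Mijatovic1} and then transport existence, uniqueness and the explosion behaviour through the time change. The untime-changed equation
\begin{equation*}
Z_u = z+\int_0^u \mu(Z_r)\,dr+\int_0^u\sigma(Z_r)\,dB_r+\int_0^u\phi(Z_r)\,dL_r,\qquad L_u=\int_0^u \I(Z_r\in\partial\cD)\,dL_r,
\end{equation*}
is driven by a Brownian motion $B$. Under \eqref{eq:domain exhaustion1} we take as known that it has a pathwise unique strong solution up to an explosion time $\zeta$, with $\norm{Z_u}\to\infty$ and $L_u\to\infty$ as $u\uparrow\zeta$ on $\{\zeta<\infty\}$. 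The construction is the standard one: localize to the bounded $\cC^3$ domains $\cD\cap\{\norm{x}\le n\}$, modified smoothly so that the oblique-reflection results of Lions--Sznitman/Dupuis--Ishii apply, and patch the solutions together by pathwise uniqueness. If the limit $L\to\infty$ is not already available, we derive it from the boundedness of $\mu,\sigma,\phi$: if $L_\zeta<\infty$ with $\zeta<\infty$, then $\norm{Z_u}$ is bounded by $\norm{z}+\|\mu\|_\infty\zeta+\sup_{r<\zeta}\norm{\int_0^r\sigma\,dB}+\|\phi\|_\infty L_\zeta<\infty$, contradicting explosion.

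\textbf{Step 1: the time-changed Brownian motion.} Let $\tau_u=\inf\{t: T_t>u\}$ be the right inverse of $T$. Since $T$ is continuous with $T_t\to\infty$, we have $T_{\tau_u}=u$. The process $B_u := W_{\tau_u}$ should be a Brownian motion with respect to $\cG_u:=\cH_{\tau_u}$. Here we need $W$ to be in synchronization with $T$, meaning $W$ is constant on the intervals where $T$ is constant. This holds automatically, because $W_{T_\cdot}$ is the object in \eqref{eq:TCRSDE}. So $W_{T_t}=B_{T_t}$, and the Kobayashi time-change formula applies. The main obstacle is to set this up correctly: one must check that $\tau_u$ are $\cH$-stopping times, that $B$ is a continuous $\cG$-local martingale with $\langle B\rangle_u=u$, and that the stochastic integrals satisfy $\int_0^t \sigma(\TZ_s)\,dW_{T_s}=\int_0^{T_t}\sigma(\TZ_{\tau_r})\,dB_r$. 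This last identity requires $\TZ$ to be in synchronization with $T$, which we verify in Step 2.

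\textbf{Step 2: existence, uniqueness and explosion.} Solve the reflected SDE driven by $B$ on $(\Omega,\cG)$ to obtain $(Z,L,\zeta)$, and define
\begin{equation*}
\TZ_t=Z_{T_t},\qquad \TL_t=L_{T_t},\qquad \TE=\inf\{t: T_t\ge\zeta\}.
\end{equation*}
Then $\TE$ is an $\cH$-stopping time, because $\zeta$ is a $\cG$-stopping time and $\{T_t\ge \zeta\}$ is suitably measurable. The change-of-variables formulas for Lebesgue--Stieltjes and stochastic integrals under continuous time changes show that $(\TZ,\TL)$ solves \eqref{eq:TCRSDE} on $[0,\TE)$; in particular $\int_0^t \I(\TZ_s\in\partial\cD)\,d\TL_s=\int_0^{T_t}\I(Z_r\in\partial\cD)\,dL_r$. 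Strongness follows because $Z$ is a measurable functional of $B$, and $B$ is a functional of $(W,T)$.

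For pathwise uniqueness, take any solution $(\TZ',\TL')$. Each integrator in \eqref{eq:TCRSDE} is constant on the flat intervals of $T$, and $\TL'$ is as well, since it is obtained from the Skorokhod map of the continuous driver. Hence $\TZ'$ is synchronized with $T$, and $Z'_u:=\TZ'_{\tau_u}$ solves the untime-changed equation driven by $B$. Uniqueness for that equation gives $Z'=Z$, and therefore $\TZ'=\TZ$. The delicate point is justifying that $\TL'$ is constant on $T$-flat intervals; we would obtain this from the uniqueness of the deterministic Skorokhod problem with oblique reflection, applied on each flat interval.

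Finally, on $\{\TE<\infty\}$ we have $T_t\uparrow\zeta$ as $t\uparrow\TE$, with $\zeta<\infty$, by the continuity of $T$. Then \eqref{eq:limit} follows from the explosion property of $(Z,L)$ at $\zeta$.
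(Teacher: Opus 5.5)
Your Step~1 runs the clock in the wrong direction. In \eqref{eq:TCRSDE} the given $\cH$-Brownian motion $W$ lives in operational time, and the driver is $W\circ T$. It is $W\circ T$, not $W$, that is constant on the flat stretches of $T$; a Brownian path is nowhere constant. So your justification of synchronization is false.

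The process $B_u:=W_{\tau_u}$ composes $W$ with the \emph{inverse} clock, and this causes three problems:
\begin{itemize}
\item In general $B$ is not a Brownian motion. If $T$ is the inverse of a driftless subordinator $S$ independent of $W$, then $\tau=S$, and $W\circ S$ is a pure-jump subordinated process.
\item The $\tau_u$ are stopping times for $(\cH_{T_t})_{t\geq0}$, but not in general for $\cH$, so $\cH_{\tau_u}$ need not make sense.
\item $B_{T_t}=W_{\tau_{T_t}}$ with $\tau_{T_t}\geq t$, which is not $W_{T_t}$.
\end{itemize}
Consequently the process $Z\circ T$ of Step~2 is driven by $B\circ T$, which in general differs from $W\circ T$, so it does not solve \eqref{eq:TCRSDE}. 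What you want is $(W\circ T)\circ\tau$, and this is just $W$ because $T_{\tau_u}=u$. So Step~1 should be discarded. Existence then follows by solving the untime-changed problem with the given $W$ on $\cH$ and composing with $T$, using the change-of-variables formulas for the continuous time change $T$ (Proposition~\ref{prop:change of var}). That is exactly the paper's argument.

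Even after this repair, two points remain uncovered.

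First, strongness. A strong solution needs $(\TZ_t,\TL_t)$ to be $\cF^{W\circ T}_t$-measurable for each $t$. Knowing that $Z$ is a functional of $W$, and that $W$ can be recovered from the whole path of $(T,W\circ T)$, only gives measurability with respect to the $\sigma$-field of the entire driving path. The paper closes this in two steps. It shows that $T_t$ and the stopped path $(W_{s\wedge T_t})_{s\geq0}$ are $\cF^{W\circ T}_t$-measurable (Lemma~\ref{lemma:measurability}). It then writes $Z_{T_t}$ as the limit of $\sum_k\I(T^{(n)}_t=k2^{-n})\,h_{k2^{-n}}\bigl((W_{s\wedge k2^{-n}})_{s\geq0}\bigr)$ along dyadic approximations $T^{(n)}_t\uparrow T_t$.

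Second, explosion. You assume $\norm{Z_u}\to\infty$ as $u\uparrow\zeta$ in the full generality of \eqref{eq:domain exhaustion1}. The localization construction by itself only gives $\sup_{u<\zeta}\norm{Z_u}=\infty$. Your boundedness argument then gives $L_\zeta=\infty$, which is fine and matches the paper. The genuine limit is what step~(II) of the paper proves. It uses a lower bound on the exit time from $\cD_{2r}$ that is uniform over starting points in $\cD_r$, with the local time controlled through a truncated signed distance, followed by a Borel--Cantelli argument.

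Your uniqueness argument, by contrast, is sound once $B$ is replaced by $W=(W\circ T)\circ\tau$, and it is a genuinely different route. The paper runs the Lions--Sznitman functional $\gamma(t)e^{-\beta T_t}$ directly on the time-changed equation (Appendix~\ref{app:uniqueness}). You instead proceed as follows:
\begin{itemize}
\item Show that any solution is constant on the flat intervals of $T$. With constant input, the signed distance $N$ satisfies $dN(\TZ'_t)\geq\alpha\,d\TL'_t$ there, which forces $\TL'$ to stay constant.
\item Pull back by $\tau$, using the change of variables for the discontinuous time change $\tau$.
\item Note that $\TZ'\circ\tau$ is adapted to $(\cH_{T_\cdot})_{\tau_u}=\cH_u$.
\item Invoke untime-changed uniqueness.
\end{itemize}
This avoids redoing the It\^o computation under the clock. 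The price is the synchronization lemma and a change of variables through the jumps of $\tau$.
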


\subsection{Main result}
Our main result concerns the interaction between trapping and
reflection-induced superdiffusivity. Retain the coefficient
regularity above and the exhaustion assumption
\eqref{eq:domain exhaustion1}, set the interior drift in
\eqref{eq:TCRSDE} to zero, and consider $\TZ=(\TX,\TY)$ in
\begin{equation}\label{eq:domain-def}
\cD=\{z=(x,y)\in\R_+\times\R^d:\|y\|_d\leq b(x)\},
\qquad b(x)=a_\infty x^\beta+f(x)
\quad\text{for large }x,
\end{equation}
where $a_\infty>0$, $\beta<1$, and $f$ satisfies
\eqref{eq:f growth}. Assume
$b\in\cC([0,\infty))\cap\cC^3((0,\infty))$,
$b(0)=0$, $b(x)>0$ for $x>0$, and $\cD$ has $\cC^3$
boundary, including at the origin.

For $u\in\Sp^{d-1}$, write
$e_x=(1,0)$, $e_u=(0,u)$ and $\phi_x(u)=\phi(x,b(x)u)$.
Assume, for constants $s_0,c_0,\sigma^2>0$, that the reflection vector field satisfies
\begin{equation}\label{eq:e1-projection}
\sup_{u\in\Sp^{d-1}}
\left(
|\langle\phi_x(u),e_x\rangle-s_0|
+
|\langle\phi_x(u),-e_u\rangle-c_0|
\right)\longrightarrow0,\quad\text{as $x\to\infty$,}
\end{equation}
making it asymptotically oblique, and
\begin{equation}\label{eq:Sigma-limit}
\sup_{y:(x,y)\in\cD}
\left|\trace
\Sigma(x,y)-e_x^{\tra}\Sigma(x,y)e_x-\sigma^2\right|
\longrightarrow0,\quad\text{as $x\to\infty$.}
\end{equation}

Let $T_t\coloneqq \inf\{u\geq0:S_u>t\}$, where $S$ is an unkilled,
strictly increasing, driftless subordinator, independent
of the Brownian driver, with Laplace exponent $\varphi$.

\begin{thm}[Explosion and second-moment asymptotics]
\label{thm:indep subordinator}
Assume the standing regularity and domain-exhaustion hypotheses,
together with \eqref{eq:domain-def}, \eqref{eq:e1-projection},
\eqref{eq:Sigma-limit}, and \eqref{eq:f growth}.
Let $S$ be an unkilled, strictly increasing, driftless subordinator,
independent of $W$, with Laplace exponent $\varphi$, and recall
$T_t=\inf\{u\geq0:S_u>t\}$.
Then the following holds for every $z\in\cD$.

\begin{enumerate}[label=(\roman*)]
\item
The solution $\TZ$  of~\eqref{eq:TCRSDE} with zero interior drift $\mu\equiv0$ explodes almost surely if $\beta<-1$, and is
non-explosive if $\beta\geq-1$.
In the explosive case,
\[
\E_z\TE=\E[S_1]\,\E_z\taue,
\]
where $\taue$ is the explosion time of the corresponding
non-time-changed diffusion and $0<\E_z\taue<\infty$.
In particular, $\E_z\TE<\infty$ if and only if $\E[S_1]<\infty$.

\item
Suppose $-1<\beta<1$ and $\varphi$ is regularly varying
at zero with index $\alpha\in[0,1]$. Define
$p:=\frac{2}{1+\beta}$ and
$\kappa:=\frac{s_0\sigma^2(1+\beta)}{2c_0a_\infty}$.
Then $\TZ$  in~\eqref{eq:TCRSDE} with zero interior drift $\mu\equiv0$ satisfies 
\begin{equation}\label{eq:RValpha}
\E_z\norm{\TZ_t}_{d+1}^2
\sim
\kappa^p
\frac{\Gamma(1+p)}{\Gamma(1+\alpha p)}
\varphi(1/t)^{-p}\quad\text{as $t\to\infty$.}
\end{equation}
\end{enumerate}
\end{thm}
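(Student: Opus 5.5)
The plan is to reduce everything to the non-time-changed reflected diffusion $Z$ via a time-change representation. Take $Z$ to solve the untime-changed version of \eqref{eq:TCRSDE} (with $T_t=t$), driven by a Brownian motion $B$ and with explosion time $\taue$. Because $S$ is independent of $W$ and $T$ is continuous (it is strictly increasing since $S$ is strictly increasing), I would show that $\TZ_t=Z_{T_t}$ and $\TL_t=L_{T_t}$ solve \eqref{eq:TCRSDE}. This uses $W_{T_s}$ with $B=W$ together with the change-of-variables formula for stochastic integrals under continuous time changes. Pathwise uniqueness from Theorem~\ref{thm:existence and uniqueness} then identifies this process with the solution. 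In particular $\TE=\inf\{t:T_t\geq\taue\}=S_{\taue-}$, and since $S$ has no fixed jump times and is independent of $\taue$, this equals $S_{\taue}$ almost surely.

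\textbf{Part (i).} Explosion of $\TZ$ is equivalent to explosion of $Z$, because $T_t\to\infty$ and $T$ is finite. I would take from \cite{Mijatovic1} that $Z$ explodes almost surely when $\beta<-1$ and does not explode when $\beta\geq -1$. Conditioning on $\taue$ and using independence,
\[
\E_z\TE=\E_z\bigl[\E[S_u]\big|_{u=\taue}\bigr]=\E[S_1]\,\E_z\taue,
\]
since $\E S_u=u\E S_1$, with the convention $\infty\cdot c=\infty$ for $c>0$. The claim $0<\E_z\taue<\infty$ must come from \cite{Mijatovic1} or be proved directly. For a direct proof I would build a Lyapunov function $V(x)$ with $\mathcal{A}V\leq -1$ in the interior and $\langle\nabla V,\phi\rangle\leq0$ on $\partial\cD$. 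The natural candidate is $V(x)=\int_x^\infty b(r)\,dr$ up to constants, which is finite precisely when $\beta<-1$; this mirrors the effective drift $\kappa$-scaling. This Lyapunov construction is the one genuinely delicate point in (i).

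\textbf{Part (ii).} I condition on $S$ and write $\E_z\|\TZ_t\|^2=\E[g(T_t)]$ with $g(u)=\E_z\|Z_u\|^2$. The key input is a second-moment law for the untime-changed process,
\[
g(u)\sim\kappa^p u^{p}\quad\text{as }u\to\infty,
\]
and I expect this to be the main obstacle. The almost-sure law of \cite{Mijatovic1} gives $X_u^{1+\beta}/u\to\kappa(1+\beta)\cdot$const. I would upgrade it to $L^2$ convergence of $X_u/u^{1/(1+\beta)}$ by proving uniform integrability, i.e.\ a bound $\E X_u^{2+\epsilon}\leq Cu^{(2+\epsilon)/(1+\beta)}$. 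For this I would apply Itô's formula to $F(x,y)=x^{1+\beta}+h(x,y)$, where the corrector $h$ is chosen so that the reflection term becomes $\approx\kappa\,dL$-compensated. This yields a semimartingale whose drift is asymptotically the constant $(1+\beta)\kappa$-type rate. Moment bounds then follow from Burkholder--Davis--Gundy applied to the bounded-volatility martingale part. The $\|Y\|\leq b(X)=o(X)$ contribution is negligible.

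Given $g(u)\sim\kappa^pu^p$ and local boundedness of $g$, it remains to compute $\E[T_t^p]$. The Laplace transform in $t$ of $\E T_t^p$ is $\Gamma(1+p)/(\lambda\varphi(\lambda)^p)$, which follows from $\int_0^\infty e^{-\lambda t}\Pb(T_t>u)\,dt=\lambda^{-1}e^{-u\varphi(\lambda)}$. The function $t\mapsto\E T_t^p$ is monotone and $\lambda\mapsto\lambda^{-1}\varphi(\lambda)^{-p}$ is regularly varying at $0$ with index $-(1+\alpha p)$. Karamata's Tauberian theorem and the monotone density theorem then give
\[
\E T_t^p\sim\frac{\Gamma(1+p)}{\Gamma(1+\alpha p)}\varphi(1/t)^{-p}.
\]
Finally I would pass from $\E g(T_t)$ to $\kappa^p\E T_t^p$ by splitting the expectation on $\{T_t\leq M\}$ and $\{T_t>M\}$. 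Since $T_t\to\infty$ in probability and $\E T_t^p\to\infty$, the part on $\{T_t\le M\}$ is negligible. This yields \eqref{eq:RValpha}.
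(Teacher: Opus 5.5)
Your architecture matches the paper's. You use the representation $\TZ=Z\circ T$ with $\TE=S_{\taue-}$, and condition on the independent clock (with $\E S_u=u\,\E S_1$) for (i). For (ii) you upgrade the almost-sure law of \cite{Mijatovic1} to $L^2$ via a $(2+\epsilon)$-moment bound and uniform integrability, then use the transform $\Gamma(1+p)/(\lambda\varphi(\lambda)^p)$ with Karamata and the truncation argument of Lemma~\ref{lemma:asymptotics of the integral}. Your detour through the monotone density theorem is equivalent to the paper's direct use of the Laplace--Stieltjes transform.

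\textbf{The gap: the moment bound.} This is the one step where you deviate. With $F=x^{1+\beta}+h$ the drift is bounded, but the volatility is not. Indeed $\partial_x(x^{1+\beta})=(1+\beta)x^{\beta}$, and any corrector that cancels the push $(1+\beta)s_0x^{\beta}\,dL$ has $|\nabla h|$ of the same order on $\partial\cD$. This is unbounded for $\beta\in(0,1)$, so ``BDG applied to the bounded-volatility martingale part'' only covers $\beta\in(-1,0]$.

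For $\beta\in(0,1)$ you must bootstrap. One has $[M]_t\lesssim\int_0^t(1+F(Z_s))^{2\beta/(1+\beta)}ds$. Set $q=(2+\epsilon)/(1+\beta)$ and $f(t)=\sup_{s\le t}\E_zF(Z_{s\wedge\rho_r})^q$, localized so that $f<\infty$. Then $f(t)\lesssim 1+t^q+t^{q/2}(1+f(t))^{\beta/(1+\beta)}$, which closes to $f(t)\lesssim1+t^q$ precisely because $\beta/(1+\beta)<1/2$.

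The paper makes the dual choice. The function $g=x+\gamma\|y\|^2/b(x)$ has bounded gradient and drift at most $Cg^{-\beta}$. It\^o for $G^{2+\delta}$, Jensen with the concave map $x\mapsto x^{(1-\beta+\delta)/(2+\delta)}$ (this is where $\beta>-1$ enters), and Bihari--LaSalle then give $\E_zG(Z_t)^{2+\delta}\lesssim1+t^{(2+\delta)/(1+\beta)}$ on the whole range $-1<\beta<1$ at once.

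\textbf{Two further missing ingredients.} First, for an upper bound, asymptotic cancellation of the $dL$-term is not enough. You need its coefficient to be non-positive, i.e.\ $\gamma>s_0/(2c_0)$ strictly. Second, $x^{1+\beta}$ (for $\beta<0$) and $\|y\|^2/b(x)$ are singular at the tip $x=0$. You therefore need a cutoff that keeps the boundary term non-positive there. The paper's $G=M_{x',x''}g+k(1-M_{x',x''})$ with $k>\sup g$ on the transition strip achieves this, because $\partial_xM_{x',x''}\geq0$, $g-k\leq0$ and $\langle\phi,e_x\rangle>0$ on that strip.

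\textbf{On proving $\E_z\taue<\infty$ directly.} The paper simply cites \cite{Mijatovic1} for this. If you prove it yourself, $V(x)=\int_x^\infty b(r)\,dr$ alone does not satisfy $\mathcal{A}V\leq-1$. Its interior generator is $-\tfrac12\Sigma_{xx}b'(x)>0$, and the decrease comes only through $-b(x)\langle\phi,e_x\rangle\,dL$. A $y$-dependent corrector such as $-\gamma'\|y\|^2$ with $\gamma'<s_0/(2c_0)$ is needed to convert local time into elapsed time.

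\textbf{Minor.} $T$ is continuous but not strictly increasing, since it is flat across each jump of $S$. The almost-sure limit of $X_u^{1+\beta}/u$ is $\kappa$ itself.
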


Equation \eqref{eq:RValpha} shows how choosing a subordinator in the domain of attraction of an $\alpha$-stable process leads to different diffusivity regimes: if $\varphi(\lambda)\sim
c\lambda^\alpha\ell(\lambda)$ as $\lambda\downarrow0$, Theorem~\ref{thm:indep subordinator} implies 
\[
\E_z\norm{\TZ_t}_{d+1}^2
\sim
\kappa^p\frac{\Gamma(1+p)}{\Gamma(1+\alpha p)}
t^{\alpha p}\bigl(c\ell(1/t)\bigr)^{-p}\quad\text{as $t\to\infty$.}
\]
\begin{ex}
    Specifically, assume $S$ is an $\alpha$-stable subordinator, i.e. its Laplace exponent is $\varphi(t)=t^{\alpha}$ for $\alpha\in(0,1)$. Assume we are in the regime $\beta\in(-1,1)$. Then~\eqref{eq:RValpha} becomes
    $$\E_z\norm{\TZ_t}_{d+1}^2\sim K_0t^{\frac{2\alpha}{1+\beta}}\quad t\to\infty,\qquad\text{where } K_0=\frac{\Gamma\parent{\frac{3+\beta}{1+\beta}}\parent{s_0\sigma^2(\beta+1)}^{\frac{2}{1+\beta}}}{\Gamma\parent{1+\frac{2\alpha}{1+\beta}}\parent{2c_0a_{\infty}}^{\frac{2}{1+\beta}}}.$$
    In contrast to the case of linear time change $T_t=\mu t$, where in the regime $\beta\in(-1,1)$ the process $\TZ$ is always super-diffusive~\cite{Mijatovic1}, when $T$ is an inverse of an  $\alpha$-stable subordinator we have:
\begin{center}
\begin{tabular}{c|c}
Range of $\alpha$ in $(0,1)$ & Mean-square behaviour ($\E_z\norm{\TZ_t}_{d+1}^2$ as $t\to\infty$) \\ \hline
$\frac{1+\beta}{2}<\alpha<1$ & Superdiffusive \\[4pt]
$\alpha=\frac{1+\beta}{2}$   & Diffusive \\[4pt]
$0<\alpha<\frac{1+\beta}{2}$ & Subdiffusive
\end{tabular}
\end{center}
\end{ex}
Without regular variation of $\varphi$, we retain the following
two-sided estimate.
\begin{prop}\label{prop:indep subordinator}
    In the same setting of Theorem \ref{thm:indep subordinator}, for $\beta\in(-1,1)$, and any Laplace exponent $\varphi$, we get that for any $z\in\cD$\footnote{Notice that we write $f(t)\asymp g(t)$ if there exists two constants $k_1,k_2>0$ such that for $t$ sufficiently big, it holds $$k_1f(t)\leq g(t)\leq k_2 f(t).$$}
    \begin{equation}
        \E_z\norm{\TZ_t}_{d+1}^2\asymp \varphi(1/t)^{-\frac{2}{1+\beta}} \text{ as }t\uparrow\infty.
    \end{equation}
\end{prop}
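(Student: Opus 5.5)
The plan is to reduce everything to the non-time-changed diffusion $Z$ (the solution of the reflected SDE with $T_t=t$) and to the inverse subordinator moments. By the time-change representation, and using that $S$ is independent of $W$, we have $\TZ_t=Z_{T_t}$ with $Z$ independent of $T$. Conditioning on $T$ then gives
\[
\E_z\norm{\TZ_t}_{d+1}^2=\E\bigl[g(T_t)\bigr],\qquad g(s):=\E_z\norm{Z_s}_{d+1}^2 .
\]

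The first step is to establish the two-sided bound $g(s)\asymp 1+s^{p}$ for all $s\ge0$, with $p=2/(1+\beta)$. This is the same input used for part (ii) of Theorem~\ref{thm:indep subordinator}, where one needs $g(s)\sim\kappa^p s^p$ as $s\to\infty$. From that asymptotic equivalence, together with continuity and finiteness of $g$ on compacts and $g(s)\ge \norm{z}^2$-type lower bounds near $0$, we can choose constants $0<k_1\le k_2$ such that $k_1 s^p\le g(s)\le k_2(1+s^p)$ for all $s\ge0$. If the paper proves only the a.s. growth together with uniform integrability, I would invoke that same moment estimate here. I expect this to be the main obstacle, since it requires a uniform $L^2$ control of $\TX$ beyond the a.s. laws of \cite{Mijatovic1}. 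The natural route is a Lyapunov function $V(x,y)\approx x^{1+\beta}$, whose generator-plus-reflection term is asymptotically constant and equal to $s_0\sigma^2(1+\beta)/(2c_0a_\infty)$. One then controls the martingale part via Burkholder--Davis--Gundy, and notes that $\norm{Y}\le b(X)=o(X)$.

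The second step is to show that $\E[T_t^p]\asymp \varphi(1/t)^{-p}$ for every fixed $p>0$ and an arbitrary Laplace exponent $\varphi$. For the upper bound, I would use the tail estimate $\Pb(T_t>u)=\Pb(S_u\le t)\le e\,\E e^{-S_u/t}=e\,e^{-u\varphi(1/t)}$, by Markov/Chernoff. Integrating $p u^{p-1}$ against this tail yields $\E T_t^p\le e\,\Gamma(1+p)\varphi(1/t)^{-p}$. For the lower bound, I would use $\Pb(T_t>u)=1-\Pb(S_u>t)\ge 1-\frac{1-e^{-u\varphi(1/t)}}{1-e^{-1}}$. This follows from $\Pb(S_u>t)\le \E(1-e^{-S_u/t})/(1-e^{-1})$. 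Taking $u=c/\varphi(1/t)$ with $c$ small makes the right side at least $1/2$, so $\E T_t^p\ge \tfrac12 c^p\varphi(1/t)^{-p}$.

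Finally, combining the two steps: $\E_z\norm{\TZ_t}^2\ge k_1\E T_t^p\gtrsim\varphi(1/t)^{-p}$ and $\E_z\norm{\TZ_t}^2\le k_2(1+\E T_t^p)\lesssim \varphi(1/t)^{-p}$ for large $t$. The last inequality holds because $\varphi(1/t)\to0$ as $t\to\infty$, which uses that $S$ is unkilled and driftless, so $\varphi(0+)=0$; hence the constant term $1$ is absorbed. This gives $\E_z\norm{\TZ_t}_{d+1}^2\asymp\varphi(1/t)^{-2/(1+\beta)}$.
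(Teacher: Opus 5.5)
Your argument is correct, but it reaches the key estimate by a different route from the paper.

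\textbf{The paper's route.} It first shows $\E_z\TX_t^2\sim\kappa^p\,\E T_t^p$ via Lemma~\ref{lemma:asymptotics of the integral}. It then identifies the order of $U(t)=\E T_t^p$ from the Laplace transform $\int_0^\infty e^{-\lambda t}U(t)\,dt=\Gamma(1+p)/(\lambda\varphi(\lambda)^p)$ of \cite{taqqu}. The final step is the $O$-version of Karamata's Tauberian theorem \cite[Theorem~2.10.2]{bingham1987regular}, with the $OR$ condition checked via subadditivity of the Bernstein function $\varphi$.

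\textbf{Your route.} You bound $\E T_t^p$ directly with two elementary estimates:
\begin{itemize}
\item the Chernoff bound $\Pb(T_t>u)\le\Pb(S_u\le t)\le e\,e^{-u\varphi(1/t)}$, which is the paper's integrability estimate with the choice $\lambda=1/t$;
\item the Markov-type bound $\Pb(S_u\ge t)\le(1-e^{-u\varphi(1/t)})/(1-e^{-1})$, evaluated at $u=c/\varphi(1/t)$.
\end{itemize}
Together these give $\tfrac12c^p\le\varphi(1/t)^p\,\E T_t^p\le e\,\Gamma(1+p)$ for every $t>0$. Crude two-sided bounds on $g$ then replace the asymptotic-integral lemma.

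\textbf{What each approach buys.} Your argument is shorter, self-contained, and gives explicit constants valid for all $t$. It also needs less input on $Z$. For $\asymp$, the upper bound \eqref{eq:upbound} suffices on one side. On the other side, Fatou with the a.s.\ growth law gives $\liminf_{s\to\infty}g(s)/s^p\ge\kappa^p$. So neither uniform integrability nor your sketched Lyapunov function $V\approx x^{1+\beta}$ is needed. The paper's route is built so that the same Laplace identity, combined with $M_z(t)\sim\kappa^pU(t)$ and Karamata's theorem \cite[Theorem~1.7.1]{bingham1987regular}, yields the sharp constant $\Gamma(1+p)/\Gamma(1+\alpha p)$ of Theorem~\ref{thm:indep subordinator}(ii). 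Your crude bounds cannot produce that constant.

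\textbf{Small points to tidy.}
\begin{itemize}
\item The identity $\Pb(T_t>u)=1-\Pb(S_u>t)$ should read $\Pb(T_t>u)\ge\Pb(S_u<t)=1-\Pb(S_u\ge t)$. Your Markov bound covers $\{S_u\ge t\}$ equally well, so nothing changes.
\item Your justification of $g(s)\ge k_1s^p$ near $s=0$ (a ``$\norm{z}^2$-type'' bound) fails for $z=0\in\cD$. You only need the bound for $s\ge s_0$: then $\E g(T_t)\ge k_1\bigl(\E T_t^p-s_0^p\bigr)$, and $\E T_t^p\to\infty$.
\item For $\varphi(0+)=0$ only unkilledness is needed; driftlessness plays no role.
\end{itemize}
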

\begin{rem}

The case $\beta=-1$ is non-explosive, but is not covered
by our second-moment results. Extending these results
to the critical case requires additional moment estimates
and is left for future work.
\end{rem}

 Theorem~\ref{thm:indep subordinator} first establishes that the solution to \eqref{eq:TCRSDE} can explode if and only if $\beta<-1$, exactly as in \cite[Thm~2.2]{Mijatovic1}. However, in our model, a new scenario naturally arises. Since the time-change can delay the reflected process, when explosion occurs, the expected explosion time might be infinite, contrary to what happens in \cite[Theorem 2.2]{Mijatovic1}. For the non-explosive case, we can study how the asymptotic behavior is modified by the time-change.

For $-1<\beta<1$, the almost-sure growth law in
\cite[Thm~2.2]{Mijatovic1} and
Corollary~\ref{cor:rep of sol} imply
\[
\frac{\norm{\TZ_t}_{d+1}}{T_t^{1/(1+\beta)}}=\frac{\norm{Z_{T_t}}_{d+1}}{T_t^{1/(1+\beta)}}
\longrightarrow
\left(\frac{s_0\sigma^2(1+\beta)}
{2c_0a_\infty}\right)^{1/(1+\beta)}
\quad\text{a.s.}
\]
This holds whenever $T_t\to\infty$ almost surely, without
requiring independence. If $m:=\E[S_1]<\infty$, then
$T_t/t\to m^{-1}$ almost surely (see \cite{Bertoin1999}), so
\[
t^{-1/(1+\beta)}\norm{Z_{T_t}}_{d+1}
\longrightarrow
\left(\frac{s_0\sigma^2(1+\beta)}
{2c_0a_\infty m}\right)^{1/(1+\beta)}
\quad\text{a.s.}
\]
Thus finite-mean clocks preserve the growth exponent.
Infinite-mean subordinators can change the growth regime.
Motivated by their role in semi-Markov models, we study
second-moment asymptotics in physical time.
The almost-sure growth law alone does not justify averaging
over the independent random clock. Proposition
\ref{prop:L2 superdiffusivity} supplies the required moment
control by strengthening the underlying growth law to
$L^2$ convergence.

The preceding asymptotic results rely on a time-change
representation that holds in a more general setting.
We establish strong existence and pathwise uniqueness up
to explosion in regular unbounded domains, allowing an
interior drift and dependence between the clock and the
driving Brownian motion.

\subsection{Existence and uniqueness of reflected diffusions in unbounded domains}
Asymptotic results in Theorem~\ref{thm:indep subordinator}  rely on a time-change
representation 
given in Theorem~\ref{thm:existence and uniqueness} above,
establishing strong existence and pathwise uniqueness up
to explosion in regular unbounded domains, allowing an
interior drift and dependence between the clock and the
driving Brownian motion.

Theorem~\ref{thm:existence and uniqueness} also shows that
reflection commutes with the time change: both the reflected
diffusion and its boundary regulator are obtained by applying
the clock to their non-time-changed counterparts.

\begin{cor}\label{cor:rep of sol}
Under the assumptions of
Theorem~\ref{thm:existence and uniqueness}, let
$(Z,L,t_{\cE})$ denote the solution of \eqref{eq:TCRSDE}
with $T_t=t$. Then the solution for a general (possibly stochastic) time change $T$ is
\[
\bigl(Z\circ T,L\circ T,T^{-1}_{t_{\cE}-}\bigr),
\qquad
T^{-1}_u:=\inf\{s\geq0:T_s>u\}.
\]
\end{cor}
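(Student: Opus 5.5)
The plan is to verify directly that the time-changed triple $(Z\circ T, L\circ T, T^{-1}_{t_\cE-})$ solves \eqref{eq:TCRSDE}, and then to invoke the pathwise uniqueness part of Theorem~\ref{thm:existence and uniqueness} to conclude that it is \emph{the} solution. Throughout, $(Z,L,t_\cE)$ is the solution with the identity clock. It is driven by the Brownian motion $W$ and adapted to the time-changed filtration $\cG_u:=\cH_{T^{-1}_u}$, or more precisely to the filtration with respect to which $W$ is a Brownian motion in the original clock. One needs a little care here: $W_{T_t}$ is the relevant integrator, so the natural setup is that $W$ is a $\cG$-Brownian motion and $T_t$ is a $\cG$-stopping time for each $t$. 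I would arrange the filtrations exactly as in the construction used in the proof of Theorem~\ref{thm:existence and uniqueness}, presumably running the non-time-changed equation in the filtration $\cG_u=\cH_{T^{-1}_u}$.

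The first step is to set $\tilde Z_t:=Z_{T_t}$ and $\tilde L_t:=L_{T_t}$ for $t<\tilde t:=T^{-1}_{t_\cE-}$, and to check that $T_t<t_\cE$ exactly when $t<\tilde t$. This follows from the continuity and monotonicity of $T$ together with the definition of the right-continuous inverse. The second step is to transfer each integral in the identity-clock equation through the change-of-variables formula for continuous time changes (Kobayashi's/Jacod's lemma). If $X$ is a $\cG$-semimartingale and $T$ is continuous, then $X\circ T$ is an $\cH$-semimartingale and
\[
\int_0^{T_t} H_u\,dX_u=\int_0^t H_{T_s}\,dX_{T_s}.
\]
This requires $X$ to be constant on each interval $[T_{s-},T_s]$, which is automatic here since $T$ is continuous. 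Applied with $X_u=u$, $X=W$ and $X=L$, and with integrands $\mu(Z_u)$, $\sigma(Z_u)$ and $\phi(Z_u)$, this yields the first equation of \eqref{eq:TCRSDE} for $\tilde Z$. The same formula applied with integrand $\I(Z_u\in\partial\cD)$ gives the complementarity condition $\tilde L_t=\int_0^t\I(\tilde Z_s\in\partial\cD)\,d\tilde L_s$.

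Next I would check the explosion statement. On $\{\tilde t<\infty\}$ we have $T_t\uparrow t_\cE$ as $t\uparrow\tilde t$. Hence $\norm{\tilde Z_t}_d$ and $\tilde L_t$ tend to infinity by the corresponding property \eqref{eq:limit} of $(Z,L)$, so \eqref{eq:limit} holds for $\tilde Z$ and $\tilde L$. Pathwise uniqueness from Theorem~\ref{thm:existence and uniqueness} then identifies this triple with the solution, including the explosion time. Since both processes blow up at their respective explosion times, neither explosion time can be strictly shorter than the other, and so they coincide.

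The main obstacle I expect is the measurability setup. For the stochastic integrals against $W_{T_\cdot}$ to be the ones defined in \eqref{eq:TCRSDE}, the process $Z$ must be constructed as a strong solution adapted to a filtration in which $W$ is a Brownian motion and each $T_t$ is a stopping time; the change-of-variables lemma needs exactly this. When $T$ is dependent on $W$, as the corollary allows, $W$ need not be a Brownian motion in the inverse-time filtration. I would therefore handle this by working pathwise: solve the Skorokhod problem path-by-path, which is valid because the strong solution is a measurable functional of the driving path, and then use the fact that stochastic integrals against $W\circ T$ coincide with time-changed integrals against $W$ whenever $W$ is constant on the flat stretches of $T^{-1}$. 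The detailed justification would be taken from the proof of Theorem~\ref{thm:existence and uniqueness}.
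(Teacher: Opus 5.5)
Your main line is the paper's argument. You compose the identity-clock solution with $T$ and move the $dT$, $dW_T$ and $d\TL$ integrals, together with the complementarity condition, through the change-of-variables formula for continuous time changes (Proposition~\ref{prop:change of var}). Since $Z$ only lives on $[0,t_{\cE})$, this should be applied to the processes stopped at the exit times $\hat\sigma_r$ of $Z$. You then identify the explosion time via the right inverse; the paper writes $\sigma_r=T^{-1}_{\hat\sigma_r-}$ and lets $r\to\infty$, which is equivalent to your $t<T^{-1}_{t_{\cE}-}\iff T_t<t_{\cE}$.

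The one structural difference is that in the paper the corollary is a by-product of the construction. The strong solution is \emph{defined} as $(Z\circ T,L\circ T)$ on each bounded approximating domain, and the real work is checking adaptedness to $\cF^{W\circ T}$ (dyadic approximations $T^{(n)}_t$, the functionals $h_t$, Lemma~\ref{lemma:measurability}). You instead identify with the solution of Theorem~\ref{thm:existence and uniqueness} by pathwise uniqueness. That is fine provided you run the uniqueness argument of Lemma~\ref{lemma:appendix} in the filtration $(\cH_{T_t})$. Both processes are adapted to it, since $\cF^{W\circ T}_t\subseteq\cH_{T_t}$, and $W\circ T$ is a continuous $(\cH_{T_t})$-local martingale with brackets $\delta_{k\ell}T$ (Lemma~\ref{lemma:local martingale}). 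Adaptedness of $Z\circ T$ to $\cF^{W\circ T}$ then comes for free.

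Your last paragraph, however, rests on a misreading of the hypotheses and should be dropped. The standing assumption is that $W$ is an $\cH$-Brownian motion and $T$ a continuous $\cH$-time change, so every $T_t$ is an $\cH$-stopping time. Dependent clocks are admitted only in this sense. In the paper's notation $\cH$ is the original-clock filtration and $(\cH_{T_t})$ the time-changed one, which is the reverse of your naming. So nothing needs to be reconstructed from the time-changed side, such as $\cH_{T^{-1}_u}$. $Z$ is $\cF^W$-adapted, hence $\cH$-adapted, and Proposition~\ref{prop:change of var} applies directly, however $T$ depends on $W$.

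The remedy you propose would not work anyway. An obliquely reflected SDE with state-dependent $\sigma$ is not solved path-by-path by a Skorokhod map, because its free term contains the stochastic integral $\int\sigma(Z)\,dW$. Moreover, the condition that $W$ be constant on the flat stretches of $T^{-1}$ is automatic for continuous $T$. It therefore cannot replace the stopping-time property that the change-of-variables formula genuinely requires.
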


Classical results cover oblique reflection in bounded domains
\cite{LionsSznitman}. In unbounded domains, normal reflection 
with semimartingale drivers are analysed in~\cite{SlominskiLeszek1996} and
oblique reflection in generalized parabolic domains are studied in
\cite{Mijatovic1}. Time-change methods for SDEs without
reflection are developed in \cite{kobayashi2011}.
Here we establish strong existence and pathwise uniqueness
up to explosion for oblique reflection in regular unbounded
domains with driver $(T,W\circ T)$, together with the
representation above. In particular, we verify adaptedness
to the natural filtration of the time-changed driver.

Neither the existence theorem nor the representation requires
independence between $W$ and $T$. This allows dependent clocks,
provided the time-change hypotheses are satisfied.
For related asymptotic results with dependent clocks in one
dimension, see \cite{savov2020semi}.

\subsection{Proof strategy and organization}
Section~\ref{sec:asymptotics} proves our main results,
Theorem~\ref{thm:indep subordinator} and
Proposition~\ref{prop:indep subordinator}, taking
Theorem~\ref{thm:existence and uniqueness} and
Corollary~\ref{cor:rep of sol} as given.
A Lyapunov estimate yields uniform integrability, strengthening
the almost-sure growth law of
\cite[Theorem~2.2]{Mijatovic1} to $L^2$ convergence.
Independence then reduces the second-moment analysis to
inverse-subordinator moment estimates \cite{taqqu} and
Tauberian arguments \cite{bingham1987regular}.
The explosion assertions follow by applying the time-change
representation to the underlying reflected diffusion.

Section~\ref{sec:existence} establishes the supporting
existence, uniqueness, and representation results.
The proofs combine the bounded-domain theory of
\cite{LionsSznitman} with localization as in
\cite[Appendix~A]{Mijatovic1}, while also verifying
adaptedness to the natural filtration of the driving process.
Appendix~\ref{app:uniqueness} contains the It\^o calculation
used in the pathwise uniqueness proof.

\section{Proof of Theorem \ref{thm:indep subordinator}}
\label{sec:asymptotics}
We start by proving our main result, which is Theorem \ref{thm:indep subordinator}. We will assume Theorem \ref{thm:existence and uniqueness} and Corollary \ref{cor:rep of sol} to hold, and prove them later in the next section.

Our aim, here is to extend the results in \cite{Mijatovic1}, by establishing the superdiffusivity of the MSD of the solution $Z=(X,Y)$, to the non-time changed, driftless equation \eqref{eq:TCRSDE}. In particular notice that $X$ denotes the first coordinate of $Z$, living in $\R^+$ and $Y$ denotes the other $d$ coordinates, living in $\R^d$. The result is obtained mainly by using tools already introduced \cite{Mijatovic1, Mijatovic2}. Henceforth we assume \eqref{eq:domain-def},\eqref{eq:e1-projection}, \eqref{eq:Sigma-limit} and in particular the function $f$ in \eqref{eq:domain-def} is such that
\begin{equation}\label{eq:f growth}
    f(x)=o\parent{x^{\frac{3\beta-1}{2}}},\ f'(x)=o(x^{\frac{3\beta-3}{2}}),\ f''(x)=o(x^{\frac{3\beta-5}{2}})\text{ as }x\to\infty.
\end{equation}
More details on this model can be found in \cite{Mijatovic1}. We aim to prove the following result that shows that the superdiffusivity observed a.s. in the model in \cite{Mijatovic1}, is observed also in the MSD sense.
\begin{prop}\label{prop:L2 superdiffusivity}
    Let \eqref{eq:domain-def},\eqref{eq:e1-projection},\eqref{eq:Sigma-limit}, and \eqref{eq:f growth} hold, then for $\beta\in(-1,1)$ and any $z\in\cD$
    \begin{equation}\label{eq: L2 limit1}
        t^{-\frac{1}{1+\beta}}X_t\xrightarrow[]{L^2(\Omega,\Pb_z)} \kappa^{\frac{1}{1+\beta}},\quad t\to\infty
    \end{equation}
    where $\kappa:=\parent{\frac{s_0\sigma^2(\beta+1)}{2c_0a_{\infty}}}$. In particular $\E_z\norm{Z_t}^2_{d+1}\sim\E_z|X_t|^2\sim (\kappa t)^{\frac{2}{1+\beta}}$ as $t\to\infty$.
\end{prop}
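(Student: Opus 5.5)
The plan is to upgrade the almost-sure growth law of \cite[Theorem~2.2]{Mijatovic1}, namely $t^{-1/(1+\beta)}X_t\to\kappa^{1/(1+\beta)}$ $\Pb_z$-a.s., to $L^2$ convergence via Vitali's theorem. It therefore suffices to show that the family $\{t^{-2/(1+\beta)}X_t^2\}_{t\ge1}$ is uniformly integrable. For this I will bound a strictly higher moment: I aim to prove $\sup_{t\ge1}\E_z\bigl[t^{-q/(1+\beta)}X_t^{q}\bigr]<\infty$ for some $q>2$, for instance $q=4$. The final claim $\E_z\norm{Z_t}^2_{d+1}\sim\E_z X_t^2$ then follows because $\norm{Y_t}_d\le b(X_t)\le C(1+X_t^{\beta})$ with $\beta<1$. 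Hence $\norm{Y_t}_d^2=o(X_t^2)$ on large values, and it is negligible after the $L^2$ convergence (using $\E_z X_t^{2\beta_+}=O(t^{2\beta_+/(1+\beta)})=o(t^{2/(1+\beta)})$).

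The moment bound comes from a Lyapunov function adapted to the domain, following the construction in \cite{Mijatovic1,Mijatovic2}. Let $V(x,y)=x^{1+\beta}$ plus a corrector $h(x,y)$. The corrector is chosen, like the $y$-dependent term in \cite{Mijatovic1}, so that the boundary term $\inn{\nabla V,\phi}$ vanishes or is nonpositive on $\partial\cD$ for large $x$. At the same time $\frac12\trace(\Sigma\nabla^2V)$ should be asymptotically the constant $\frac{(1+\beta)s_0\sigma^2}{2c_0a_\infty}=\kappa(1+\beta)$. The cleanest choice transfers the boundary push to the interior. Take a function of the form $x^{1+\beta}+\gamma\,x^{\beta}\cdot\bigl(\norm{y}^2-b(x)^2\bigr)/b(x)\cdot(\dots)$, scaled so that its normal derivative along $\phi$ cancels the $s_0(1+\beta)x^\beta$ contribution. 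Conditions \eqref{eq:e1-projection}, \eqref{eq:Sigma-limit} and \eqref{eq:f growth} should make all error terms $o(1)$ uniformly on the cross-section. I expect $V\asymp x^{1+\beta}$ and $\mathcal{G}V\le C$ on $\cD$, where $\mathcal{G}$ is the generator, and $\inn{\nabla V,\phi}\le C$ on the compact part of $\partial\cD$.

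Apply It\^o's formula to $V(Z_t)^k$ with $k=2$ (so $q=2(1+\beta)\cdot k/(1+\beta)$ gives fourth power of $X$ scaled; in general pick $k$ with $k(1+\beta)>2$). The generator term is bounded by $C(1+V^{k-1})$, since the second-derivative cross terms $V^{k-2}|\sigma\nabla V|^2$ are $O(V^{k-2}x^{2\beta}\cdot b(x)^{-?})$. I will need $|\nabla V|^2\lesssim V^{(2\beta)/(1+\beta)}\le C(1+V)$, which holds for $\beta<1$. The reflection term is controlled by $\E L_{t\wedge\tau_R}$ restricted to a compact set, which is bounded linearly in $t$. Localizing with $\tau_R$, taking expectations, and using Gronwall together with a Jensen-type induction gives $\E_z V(Z_t)^k\le C(1+t)^k$. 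This is exactly $\E_z X_t^{k(1+\beta)}\le C t^{k}$, the required bound.

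The main obstacle is the construction of the corrector $h$. It must simultaneously neutralize the oblique boundary push, keep $\mathcal{G}V$ bounded, and keep $V^{k}$-type cross terms subcritical despite the domain possibly shrinking ($\beta<0$), where $b(x)\to0$ and derivatives of $h$ in $y$ scale like $b(x)^{-1}$. My first attempt is to reuse verbatim the function from the proof of \cite[Theorem~2.2]{Mijatovic1} and check only the extra quadratic-variation terms. If that function only gives two-sided bounds on $\mathcal{G}V$ without a sign on the boundary, I would instead accept a boundary term bounded by $C\,d L_t$ and control $\E L_t$ by a second Lyapunov argument with $\norm{y}^2/b(x)$.
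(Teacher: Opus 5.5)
Your architecture matches the paper's: the a.s.\ law of \cite[Theorem~2.2]{Mijatovic1}, a moment of $X_t$ of order strictly above $2$ bounded at the sharp rate via It\^o's formula for a power of a Lyapunov function whose reflection term has a sign, then uniform integrability and Vitali. However, the step you flag as the main obstacle is left open, and the corrector you sketch does not give the sign you need.

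\emph{Exact cancellation gives no sign.} You scale the corrector so that it \emph{cancels} the boundary push and $\frac12\trace(\Sigma\nabla^2V)$ tends to the exact limiting constant. At that critical weight $\gamma=s_0/(2c_0)$, the hypotheses (which give no rate in \eqref{eq:e1-projection}) yield only $\langle\nabla V,\phi\rangle=o(x^{\beta})$ on $\partial\cD$, with no sign. Nothing in your argument controls such a term: $L_t$ grows like $X_t/s_0\asymp t^{1/(1+\beta)}$, and any loss in the exponent of the moment bound destroys uniform integrability.

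\emph{The fallback fails for the same reason.} A term $C\,dL_t$ in $dV$ becomes $kCV^{k-1}\,dL_t$ in $d(V^k)$. That calls for a weighted local-time estimate, not a bound on $\E L_t$, and heuristically $\int_0^tV_s^{k-1}\,dL_s\asymp t^{k-1+1/(1+\beta)}\gg t^k$ when $\beta<0$.

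\emph{The compact part is not handled.} Near the origin $x^{1+\beta}$ is in general not $\cC^2$. On the compact part of the boundary, your linear-in-$t$ bound on the local time is asserted without proof, and $\langle\phi,e_x\rangle$ need not have a sign for moderate $x$.

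The missing idea is to \emph{overcompensate} and then cut off, as in Lemma~\ref{lemma:properties of G}. Take $g=x+\gamma\norm{y}_d^2/b(x)$ with $\gamma>s_0/(2c_0)$ strictly; the exact constant is never needed, since the a.s.\ law supplies it. Set $G=M_{x',x''}g+\bar k(1-M_{x',x''})$ with $\bar k>\sup_{x'\le x\le x''}|g|$. Then $G$ is constant for $x<x'$. On $[x',x'']$ the extra boundary term $\partial_xM_{x',x''}(g-\bar k)\langle\phi,e_x\rangle$ is non-positive, because $g<\bar k$ and $\langle\phi,e_x\rangle\ge s_0/2$. Hence $\langle\nabla G,\phi\rangle\le0$ on all of $\partial\cD$, $\Delta_\Sigma G\le CG^{-\beta}$, $\nabla G$ is bounded, and $G\ge\max(x,\delta_0)$ for some $\delta_0>0$.

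Now set $V:=G^{1+\beta}$. Then $\langle\nabla V,\phi\rangle=(1+\beta)G^{\beta}\langle\nabla G,\phi\rangle\le0$, $\Delta_\Sigma V=(1+\beta)G^{\beta}\Delta_\Sigma G+\beta(1+\beta)G^{\beta-1}\nabla G^{\tra}\Sigma\nabla G\le C$, and $V\ge x^{1+\beta}$, after which your plan goes through. It is then the paper's proof in other variables: your $V^k$ with $k=(2+\delta)/(1+\beta)$ is the paper's $G^{2+\delta}$. Your Gronwall--Jensen step is its Bihari--LaSalle step with the concave $u(x)=x^{(1-\beta+\delta)/(2+\delta)}$.

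Two slips:
\begin{enumerate}
\item $k=2$ gives $q=2(1+\beta)$, not $q=4$, and this is $\le2$ for $\beta\le0$; you need $k>2/(1+\beta)$, as you then say.
\item $\frac{(1+\beta)s_0\sigma^2}{2c_0a_\infty}$ is $\kappa$ itself, not $\kappa(1+\beta)$.
\end{enumerate}
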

To establish the behaviour of the second moment of the process we need full control on it, including its behaviour for small $t$. Thus as in \cite{Mijatovic2} we introduce the cut-off function\footnote{The function is slightly modified since the original had a discontinuous second derivative.}
$$M(x,v,u)=\text{exp}\parent{-\frac{(u-x)^3}{(u-v)^3\parent{(u-v)^2-(u-x)^2}}}\I_{(v,u]}(x)+\I_{(u,\infty)}(x)\quad u>v$$
For fixed $u>v$, denote by $M_{v,u}(x)$ the function $x\mapsto M(x,v,u)$ which is smooth  and such that
\begin{equation}\label{eq:properties of M}
\begin{split}
    &M_{v,u}(x)\geq0\ \forall\,x\geq0,\quad M_{v,u}(x)=0\ \forall\,x\leq v,\quad M_{v,u}(x)=1\ \forall\,x\geq u\\
    &\partial_xM_{v,u}(x)=\partial_{xx}^2M_{v,u}(x)=0\ \forall\,x\in[v,u]^c,\quad \partial_xM_{v,u}(x)\geq0\ \forall\,x\geq0
\end{split}
\end{equation}
We also consider the same Lyapunov function introduced by the authors in \cite{Mijatovic1}
$$g(z)=g(x,y):=x+\gamma\frac{\norm{y}^2_d}{b(x)},$$
and introduce $\nu: \partial\cD \to \R$ defined by $\nu (z)   := \langle \phi( z ) ,  \nabla g( z ) \rangle$, which will be useful later.\\
We state two technical lemmas which will be used in the proof of Proposition \ref{prop:L2 superdiffusivity}.
For a twice continuously differentiable scalar function $h$, write
\[
\Delta_\Sigma h(z)
:=\operatorname{tr}\bigl(\Sigma(z)D^2h(z)\bigr)
=\sum_{k,\ell=1}^{d+1}
  \Sigma_{k\ell}(z)\partial_{k\ell}h(z).
\]

\begin{lemma}\label{lemma:properties of g}
    Let \eqref{eq:domain-def},\eqref{eq:e1-projection},\eqref{eq:Sigma-limit}, and \eqref{eq:f growth} hold, then there exist $x'>0$ big enough, such that
    \begin{enumerate}
        \item There exists a constant $C>0$, independent of $x$, such that for all $y$ such that $(x,y)\in\cD$
        \begin{equation}\label{eq:pre-drift bound}
            \Delta_{\Sigma}g(x,y)\leq C(g(x,y))^{-\beta}\quad\forall x>x'
        \end{equation}
        \item For all $x>x'$ it holds 
        $$\inf_{y\in\partial\cD}\inn{\phi(x,y),e_x}\geq\frac{s_0}{2}>0$$
        \item For $\gamma>s_0/(2c_0)$ it holds
        \begin{equation}
            \inn{\nabla g(x,y),\phi(x,y)}\leq0\quad \forall\,(x,y)\in\partial\cD\cap([x',\infty)\times\R^d)
        \end{equation}
    \end{enumerate}
\end{lemma}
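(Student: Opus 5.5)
The plan is a direct computation with $g(x,y)=x+\gamma\norm{y}_d^2/b(x)$ on the region $\{x>x'\}$. We use the domain asymptotics \eqref{eq:domain-def} and \eqref{eq:f growth}, which give $b(x)\sim a_\infty x^\beta$, $b'(x)\sim\beta a_\infty x^{\beta-1}$ and $b''(x)=O(x^{\beta-2})$. On $\cD$ we have $\norm{y}_d\le b(x)$. Hence $0\le g(x,y)-x\le\gamma b(x)=O(x^\beta)=o(x)$ because $\beta<1$, so $g\asymp x$ for large $x$. Every bound that is naturally in powers of $x$ can therefore be converted to powers of $g$ with constants independent of $x$.

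For item (1), we compute the second derivatives:
\[
\partial_{xx}g=\gamma\norm{y}^2\Bigl(\tfrac{2b'^2}{b^3}-\tfrac{b''}{b^2}\Bigr),\qquad
\partial_{x y_j}g=-\tfrac{2\gamma y_j b'}{b^2},\qquad
\partial_{y_iy_j}g=\tfrac{2\gamma}{b}\delta_{ij}.
\]
Using $\norm{y}\le b$, the first two are $O(b'^2/b+|b''|)=O(x^{\beta-2})$ and $O(|b'|/b)\cdot O(1)=O(x^{-1})$. The dominant term is $\partial_{y_iy_j}g=O(1/b)=O(x^{-\beta})$. Since $\Sigma$ is bounded, $\Delta_\Sigma g\le C'\bigl(x^{-\beta}+x^{-1}+x^{\beta-2}\bigr)$. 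For $-1<\beta<1$ we have $-\beta>-1$ and $-\beta>\beta-2$, so the last two terms are absorbed for large $x$. Using $g\asymp x$ then gives $\Delta_\Sigma g\le C g^{-\beta}$. If $\beta\le-1$ is also allowed, the terms $x^{-1}$ and $x^{\beta-2}$ dominate instead. In that case we would bound them by $x^{-\beta}$ only if the exponents permit, so we assume the lemma is used in the range $\beta\in(-1,1)$. (Equivalently, we can use $\operatorname{tr}\Sigma$ on the $y$-block, which tends to $\sigma^2$ by \eqref{eq:Sigma-limit}, to get the sharper constant $2\gamma\sigma^2/b$.)

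Item (2) is immediate from \eqref{eq:e1-projection}. Since $\sup_u|\inn{\phi_x(u),e_x}-s_0|\to0$, we choose $x'$ so that this supremum is below $s_0/2$ for $x>x'$. Every boundary point with $x>x'$ has the form $(x,b(x)u)$, so the bound holds there.

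Item (3) is the only step needing care. At a boundary point $(x,b(x)u)$ the gradient is
\[
\nabla g=\Bigl(1-\gamma\tfrac{\norm{y}^2b'}{b^2},\ \tfrac{2\gamma y}{b}\Bigr)=\bigl(1-\gamma b'(x),\ 2\gamma u\bigr).
\]
Therefore
\[
\inn{\nabla g,\phi}=(1-\gamma b')\inn{\phi_x(u),e_x}-2\gamma\inn{\phi_x(u),-e_u}.
\]
As $x\to\infty$ we have $b'(x)\to0$ (since $\beta<1$), and by \eqref{eq:e1-projection} this expression converges uniformly in $u$ to $s_0-2\gamma c_0$. That limit is strictly negative precisely when $\gamma>s_0/(2c_0)$. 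Enlarging $x'$ so that the uniform error is below $(2\gamma c_0-s_0)/2$ gives $\inn{\nabla g,\phi}\le0$ on $\partial\cD\cap([x',\infty)\times\R^d)$.

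The main obstacle is bookkeeping in (1): we must check that the cross and $\partial_{xx}$ terms are uniformly dominated by $x^{-\beta}$. This uses both the $f$-growth conditions \eqref{eq:f growth}, which make $b'$ and $b''$ inherit the power asymptotics, and the restriction $\beta>-1$. A single $x'$ is then taken as the maximum of the thresholds from the three items.
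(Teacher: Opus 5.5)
Your proof is correct. It is a self-contained version of what the paper does largely by citation, and the real difference is in item (1).

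\textbf{Item (1).} The paper imports the sharp estimate $b(x)\Delta_\Sigma g(x,y)\le 2\gamma\sigma^2+\varepsilon_1$ from \cite[Lemma 5.1]{Mijatovic1}; this is where \eqref{eq:Sigma-limit} enters. It then converts $b(x)$ into $g^{\beta}$ using $g\sim x$ and $b\sim a_\infty x^\beta$, and needs the remark that $\Delta_\Sigma g\ge 0$ for large $x$ in order to multiply the two inequalities.

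You instead compute the Hessian and bound each entry in absolute value. This uses only the boundedness of $\Sigma$ and the bounds $b'=O(x^{\beta-1})$ and $b''=O(x^{\beta-2})$, which follow from \eqref{eq:f growth}. Your route needs neither \eqref{eq:Sigma-limit} nor any sign information on $\Delta_\Sigma g$. The cost is a non-explicit constant $C$, but existence of some $C$ is all the lemma asserts and all that Lemma~\ref{lemma:properties of G} later uses.

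\textbf{Items (2) and (3).} Item (2) matches the paper. For item (3), your boundary gradient $\nabla g=(1-\gamma b'(x),\,2\gamma u)$ and the uniform limit $s_0-2\gamma c_0<0$ reprove exactly the content of \cite[Lemma 5.2]{Mijatovic1}, which the paper only cites.

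\textbf{One correction.} Your restriction to $\beta>-1$ in item (1) comes from an arithmetic slip. The two comparisons you need, $-\beta>-1$ and $-\beta>\beta-2$, are each equivalent to $\beta<1$. So $x^{-\beta}$ dominates $x^{-1}$ and $x^{\beta-2}$ for every $\beta<1$; when $\beta\le -1$ the term $x^{-\beta}$ grows while the other two decay. Your argument therefore covers the full range $\beta<1$ allowed by the statement. You should delete the claim that the other terms dominate when $\beta\le-1$, and the closing remark that item (1) uses $\beta>-1$.

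\textbf{Minor point.} At $\beta=0$, the statement $b'\sim\beta a_\infty x^{\beta-1}$ is not meaningful. Write $b'=O(x^{\beta-1})$ instead, since only this $O$-bound is used.
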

\begin{proof}
    Point (2) is just a consequence of \eqref{eq:e1-projection}, and point (3) is just \cite[Lemma 5.2]{Mijatovic1}. For (1) we have, from \cite[Lemma 5.1]{Mijatovic1}, that for any fixed $\varepsilon_1>0$ there exists $x_1=x_1(\varepsilon_1)$ such that
    \begin{equation}\label{eq:bDelta ineq}
        b(x)\Delta_{\Sigma}g(x,y)\leq C_1:= 2\gamma\sigma^2+\varepsilon_1\quad \forall\,(x,y)\in\cD\cap([x_1,\infty)\times\R^d)
    \end{equation}
    Moreover, from \cite[Lemma 5.1]{Mijatovic1}, we also know that as $x\to\infty$ $g(x,y)\sim x$ (uniformly in $y$) so also $g(x,y)^{\beta}\sim x^{\beta}$. Finally from \eqref{eq:domain-def}, $b(x)=a_{\infty}x^{\beta}+f(x)$ and $f(x)=o(x^{\beta})$, thus $b(x)\sim a_{\infty}x^{\beta}$, thus $g(x,y)^{\beta}\sim b(x)/a_{\infty}$, and for any $\varepsilon_2>0$ there exists $x_2$ sufficiently big, such that
    \begin{equation}\label{eq:g ineq}
         g(x,y)^{\beta}\leq \frac{1+\varepsilon_2}{a_{\infty}}b(x)\quad \forall\,(x,y)\in\cD\cap([x_2,\infty)\times\R^d)
    \end{equation}
    Let $\hat{x}=x_1\vee x_2$, combining \eqref{eq:bDelta ineq} and \eqref{eq:g ineq} (since we can assume $\Delta_{\Sigma}g\geq0$ for $x>x_1$) we get
    \begin{equation}
       g(x,y)^{\beta}\Delta_{\Sigma}g(x,y)\leq C\quad \forall\,(x,y)\in\cD\cap([\hat{x},\infty)\times\R^d)
    \end{equation}
    where $C:=C_1(1+\varepsilon_2)/a_{\infty}$. Since $g\geq0$, we can multiply both sides by $g(x,y)^{-\beta}$, and get \eqref{eq:pre-drift bound}. Then it suffices to take $x'\geq\hat{x}$ such that all the claims hold at once.
\end{proof}
Now let $x''>x'$, with $x'$ from Lemma \ref{lemma:properties of g}, and define a new function $G$
\begin{equation}\label{eq:Lyapunov G}
    G(x,y):=M_{x',x''}(x)g(x,y)+k(1-M_{x',x''}(x))\quad (x,y)\in\cD
\end{equation}
where $k$ is a constant such that $k=k(x',x'')>\sup_{z:x\in[x',x'']}|g(z)|$.
\begin{lemma}\label{lemma:properties of G}
    Under the same hypothesis of Lemma \ref{lemma:properties of g} we have that
    \begin{enumerate}
        \item There exists a constant $C>0$, independent of $x$, such that 
        \begin{equation}\label{eq:G drift bound}
            \Delta_{\Sigma}G(x,y)\leq C(G(x,y))^{-\beta}\quad\forall (x,y)\in\cD
        \end{equation}
        \item For $\gamma>s_0/(2c_0)$ it holds
        \begin{equation}\label{eq:G neg. local time}
            \inn{\nabla G(x,y),\phi(x,y)}\leq0\quad \forall\,(x,y)\in\partial\cD
        \end{equation}
        \item $\sup_{z\in\cD}\norm{\nabla G}_{d+1}<\infty$ and $G(x,y)\geq x$ for all $x\geq0$, and moreover $\inf_{z\in\cD}G(z)\geq\delta>0$.
    \end{enumerate}
\end{lemma}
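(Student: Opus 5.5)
The proof splits $\cD$ according to the value of $M_{x',x''}(x)$ into three regions: $\{x\le x'\}$, the transition strip $\{x'\le x\le x''\}$, and $\{x\ge x''\}$. On the first region $G\equiv k$, on the last $G\equiv g$, and in between $G$ is a convex combination of $g$ and $k$. Throughout we write
\[
\partial_x G = M'(g-k)+M\partial_xg,\qquad \partial_y G=M\partial_y g,
\]
with $M=M_{x',x''}$, and we use the properties \eqref{eq:properties of M} of the cut-off.

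\textbf{Point (3).} On $\{x\le x'\}$ we have $G=k\ge\sup|g|+$(margin), so $G\ge\delta>0$. Since $k>\sup g$ on the strip, the same holds there, with $G\ge\min(g,k)$. We can also take $k\ge x''$, so that $G\ge x$ wherever $x\le x''$. On $\{x\ge x''\}$ we have $G=g=x+\gamma\|y\|^2/b(x)\ge x$.

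For the gradient, the strip is compact, so every term above is bounded there. For $x>x''$, $\nabla G=\nabla g$, with
\[
\partial_x g=1-\gamma\|y\|^2 b'/b^2,\qquad \partial_y g=2\gamma y/b .
\]
Since $\|y\|\le b$, we get $|\partial_y g|\le 2\gamma$ and $|\partial_xg|\le 1+\gamma|b'|$. Here $|b'|\sim a_\infty|\beta|x^{\beta-1}\to0$, so the gradient is bounded. Lipschitz continuity of $b$ near $0$ is not needed because $G$ is constant there.

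\textbf{Point (2).} On $\{x<x'\}$ we have $\nabla G=0$, and on $\{x>x''\}$ the claim is Lemma~\ref{lemma:properties of g}(3). On the strip,
\[
\inn{\nabla G,\phi}=M'(g-k)\inn{\phi,e_x}+M\inn{\nabla g,\phi}.
\]
The first term is $\le0$ because $M'\ge0$, $g-k<0$ and $\inn{\phi,e_x}\ge s_0/2$ for $x>x'$ (Lemma~\ref{lemma:properties of g}(2)). The second term is $\le0$ by Lemma~\ref{lemma:properties of g}(3), since $x\ge x'$.

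\textbf{Point (1).} On $\{x<x'\}$ the left side is $0$ and the right side is positive. On $\{x>x''\}$ the bound is \eqref{eq:pre-drift bound} with $G=g$. On the strip,
\[
\Delta_\Sigma G=M''(g-k)\Sigma_{xx}+2M'\,(\Sigma\nabla g)_x+M\Delta_\Sigma g .
\]
This is a continuous function of $z$ on a compact set, because $M$ is smooth, $g$ is $\cC^2$ for $x>0$, and $\Sigma$ is bounded; hence it is bounded above by some $C'$. On the same compact strip, $G$ is continuous with $\delta\le G\le K$, so $G^{-\beta}$ is bounded below by a positive constant: $\min(\delta^{-\beta},K^{-\beta})$, whatever the sign of $\beta$. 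Choosing $C$ as the maximum of the constant from Lemma~\ref{lemma:properties of g}(1) and $C'/\min G^{-\beta}$ gives \eqref{eq:G drift bound}.

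The main obstacle is making the strip bound on $\Delta_\Sigma G$ rigorous, i.e.\ checking the regularity of $g$ at the boundary points $\|y\|=b(x)$ with $x\in[x',x'']$. This is fine because $b\in\cC^3$ and $b>0$ there. The other point requiring care is choosing $k$ large enough to guarantee both $G\ge x$ and $g-k<0$ on the strip.
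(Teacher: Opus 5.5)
Your proposal is correct and follows essentially the same route as the paper. It uses the three-region decomposition induced by $M_{x',x''}$, continuity on the compact transition strip for (1) and for the gradient bound in (3), and, for (2), the sign argument $M'\geq0$, $g-k<0$, $\inn{\phi,e_x}>0$ together with Lemma~\ref{lemma:properties of g}(3). Your additional detail (the explicit lower bound on $G^{-\beta}$ on the strip, and the bound on $\nabla g$ for $x>x''$ via $b'\to0$) only makes the paper's terse steps explicit. Also, $k\geq x''$ need not be imposed separately: $g\geq x$ and $k>\sup|g|$ on the strip already force $k>x''$.
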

\begin{proof}
    1) Notice that by definition
    \begin{equation}\label{eq:G shape}
        G(x,y):=\begin{cases}k\quad x<x'\\M_{x',x''}(x)g(x,y)+k(1-M_{x',x''}(x))\quad x'\leq x\leq x''\\ g(x,y)\quad x>x''\end{cases}
    \end{equation} 
    Therefore \eqref{eq:G drift bound} is trivial for $x<x'$, and because of  Lemma \ref{lemma:properties of g}, it holds for $x>x''$. On $[x',x'']$ the inequality is still verified since $G$ and $\Delta_{\Sigma}G$ are continuous functions and thus
    $$\sup_{\cD\cap([x',x'']\times\R^d)}\abs{(G(x,y))^{\beta}\Delta_{\Sigma}G(x,y)}\leq C$$
    which implies \eqref{eq:G drift bound} for all $x\in[x',x'']$.\\
    2) Notice that 
    $$\partial_xG(z)=\partial_xM_{x',x''}(x)(g(z)-k)+M_{x',x''}(x)\partial_xg(z)\quad \nabla_yG(z)=M_{x',x''}(x)\nabla_yg(z)$$
    Now, for $z\in\partial\cD$, consider the inner product with the vector field
    \begin{align*}
        \inn{\nabla G(z),\phi(z)}=&\partial_xM_{x',x''}(x)(g(z)-k)\inn{\phi(z),e_x}+M_{x',x''}(x)\inn{\nabla g(z),\phi(z)}
    \end{align*}
    By the properties in \eqref{eq:properties of M} we know that this is $0$ for $x\leq x'$, while for $x\geq x''$ is non-positive since $\inn{\nabla g(z),\phi(z)}\leq0$ by Lemma \ref{lemma:properties of g}. As for $x\in[x',x'']$ we have that $\partial_xM_{x',x''}(x)\geq0$ and by definition $g(z)-k\leq0$, while $\inn{\phi(z),e_x)}>0$, thus even on this interval we have non-positivity, and we can conclude that
    $$\inn{\nabla G(z),\phi(z)}\leq0\text{ for all }z\in\partial\cD$$
    3) Similarly to the previous point we can show the boundedness of the gradient, whereas looking at \eqref{eq:G shape}, it is clear that, for all $y$, $G$ is greater than $x''\wedge k$ for all $x\notin[x',x'']$, and on this interval is greater than $x$, since for $x=x'$  it holds $G(x',y)=k>0$, and for $x>x'$
    $$G(x,y)-x=M_{x',x''}(x)(g(x,y)-k)+k-x\geq (k-x)(1-M_{x',x''}(x))>0.$$
\end{proof}
Finally we need the following result, that extends Gronwall's lemma. Its proof can be found in \cite[Lemma 1]{LaSalle}.
\begin{lemma}[Bihari-LaSalle]\label{lemma:non-lin gronwall}
    Let $f(t)$ be non negative for $t\geq0$, $\Phi(t)$ non-negative and continuous for $t>0$ and such that 
    $$\int_0^t\Phi(s)ds\text{ exists for all }t\geq0$$
    and let $u(x)$ be non-negative, non-decreasing and continuous for $x\geq0$, and such that
    $$\int_{0^+}^x\frac{1}{u(y)}dy=\Psi(x) \text{ exists for all }x>0$$
    If it holds that
    $$f(t)\leq a+\int_0^t\Phi(s)u(f(s))ds\quad\forall\,t\geq0$$
    for some $a>0$, then
    $$\Psi(f(t))\leq\Psi(a)+\int_0^t\Phi(s)ds$$
\end{lemma}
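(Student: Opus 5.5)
The plan is the standard comparison argument: replace $f$ by the right-hand side of the hypothesis, which is regular enough to differentiate, and then integrate the resulting differential inequality through $\Psi$. Set
\[
R(t):=a+\int_0^t\Phi(s)\,u(f(s))\,ds,\qquad t\geq0,
\]
where, as in the statement, $f$ is implicitly measurable and $\Phi\,u(f)$ locally integrable, so that $R$ is well defined. By hypothesis $f(t)\leq R(t)$ for all $t\geq0$. Since the integrand is non-negative, $R$ is non-decreasing, and $R(t)\geq a>0$. Moreover $R$ is absolutely continuous on every $[0,t]$, and on $(0,\infty)$ its derivative is $R'(s)=\Phi(s)u(f(s))$ for almost every $s$.

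The key pointwise step uses monotonicity of $u$. From $f(s)\leq R(s)$ and $u$ non-decreasing we get $u(f(s))\leq u(R(s))$, hence
\[
R'(s)\leq \Phi(s)\,u(R(s))\quad\text{for a.e. } s>0.
\]
We also need $u>0$ on $(0,\infty)$. This holds because $\int_{0^+}^x u(y)^{-1}dy$ is assumed finite for every $x>0$; if $u(x_0)=0$ for some $x_0>0$, monotonicity would force $u\equiv0$ on $[0,x_0]$ and the integral would diverge. Consequently $\Psi$ is $\cC^1$ on $(0,\infty)$ with $\Psi'=1/u>0$, so $\Psi$ is increasing. We extend it by $\Psi(0):=0$, which keeps it non-decreasing on $[0,\infty)$.

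Next we integrate. On $[\varepsilon,t]$ with $\varepsilon>0$, $R$ takes values in the compact set $[a,R(t)]\subset(0,\infty)$, where $\Psi$ is $\cC^1$. The chain rule for a $\cC^1$ function composed with an absolutely continuous one therefore gives
\[
\Psi(R(t))-\Psi(R(\varepsilon))=\int_\varepsilon^t\frac{R'(s)}{u(R(s))}\,ds\leq\int_\varepsilon^t\Phi(s)\,ds .
\]
Letting $\varepsilon\downarrow0$ uses continuity of $R$ at $0$ with $R(0)=a$, continuity of $\Psi$ at $a>0$, and integrability of $\Phi$ near $0$. This yields $\Psi(R(t))\leq\Psi(a)+\int_0^t\Phi(s)\,ds$. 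Finally, since $\Psi$ is non-decreasing and $f(t)\leq R(t)$, we obtain $\Psi(f(t))\leq\Psi(R(t))$, which is the claim; the case $f(t)=0$ is covered by the convention $\Psi(0)=0$.

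The only delicate point is the regularity bookkeeping, not the inequality itself. $\Phi$ is only continuous on $(0,\infty)$ and $\Psi$ is only defined as an improper integral at $0$. This is why I work on $[\varepsilon,t]$, where $R$ stays in $[a,\infty)$ away from the singularity of $\Psi$ at $0$, and only then pass to the limit. If one prefers to avoid absolutely continuous calculus altogether, the alternative is to approximate $f$ by step functions, but the $\varepsilon$-localisation above should suffice.
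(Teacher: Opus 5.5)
Your proposal is correct and is the standard comparison argument behind \cite[Lemma 1]{LaSalle}, which the paper cites rather than proves: you dominate $f$ by $R(t)=a+\int_0^t\Phi(s)u(f(s))\,ds$, use monotonicity of $u$ to get $R'\leq\Phi\,u(R)$ almost everywhere, integrate through $\Psi$, and conclude by monotonicity of $\Psi$. The $\varepsilon$-localisation is harmless but unnecessary, since $R\geq a>0$ already holds on all of $[0,t]$, so you can apply the chain rule directly on $[0,t]$.
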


We are now ready to prove Proposition~\ref{prop:L2 superdiffusivity}.

\begin{proof}[Proof of Proposition \ref{prop:L2 superdiffusivity}]
    Applying It\^o formula to $G(Z_t)$ yields
    $$G(Z_t)=G(z)+\int_0^t\inn{\nabla G(Z_s),\phi(Z_s)}dL_s+\frac{1}{2}\int_0^t\Delta_{\Sigma}G(Z_s)ds+\int_0^t\inn{\nabla G(Z_s),\sigma(Z_s)dW_s}$$
    In particular notice that
    \begin{align}\label{eq:qv G}
      \nonumber  [G(Z)]_t&=[M]_t=\int_0^t\sum_{i,j=1}^{d+1}\Sigma_{ij}(Z_s)\partial_jG(Z_s)\partial_iG(Z_s)ds\\&\leq\int_0^t\norm{\Sigma(Z_s)}_{op}\norm{\nabla G(Z_s)}^2_{d+1}ds\leq C't,
    \end{align}
    where $M$ is the martingale part of $G(Z)$, and $C'>0$ is a fixed deterministic constant, since $\Sigma=\sigma\sigma$ is bounded in operator norm and the gradient of $G$ is also bounded by Lemma \ref{lemma:properties of G}.
    Now let $\delta>0$ sufficiently small. Applying It\^o to $(G(Z_t))^{2+\delta}$ and using Lemma \ref{lemma:properties of G} and \eqref{eq:qv G}, we get
    \begin{align*}     G(Z_t)^{2+\delta}=&G(z)^{2+\delta}+\int_0^t(2+\delta)G(Z_s)^{1+\delta}\inn{\nabla G(Z_s),\phi(Z_s)}dL_s\\
    &+\int_0^t\frac{2+\delta}{2}G(Z_s)^{1+\delta}\Delta_{\Sigma}G(Z_s)ds+
        +\int_0^t\frac{(2+\delta)(1+\delta)}{2}G(Z_s)^{\delta} d[G(Z)]_s+M'_t\\
        \leq&G(z)^{2+\delta}+\int_0^t\parent{\frac{2+\delta}{2}CG(Z_s)^{1-\beta+\delta}+C'\frac{(2+\delta)(1+\delta)}{2}G(Z_s)^{\delta}}ds+M'_t
    \end{align*}
    Now, notice that for any $x>\delta'>0$, the function $v(x):=ax^{1-\beta+\delta}+bx^{\delta}$, is smaller then $w(x):=cx^{1-\beta+\delta}$, choosing an appropriate $c>0$. Thus, since by (3) in Lemma \ref{lemma:properties of G}, we can assume $G$ to be strictly positive (so greater or equal of a sufficiently small $\delta'$), then we have that for an appropriate $C''>0$, we get that 
    \begin{align}\label{eq:G inequality}
        G(Z_t)^{2+\delta}\leq&G(z)^{2+\delta}+\int_0^tC''G(Z_s)^{1-\beta+\delta}ds+M'_t
    \end{align}
    Now let $\rho_r:=\inf\graf{t>0:G(Z_t)^{2+2\delta}\geq r}$, and notice that using \eqref{eq:qv G}, we get
    $$[M']_{t\wedge\rho_r}=\int_0^{t\wedge\rho_r}(2+\delta)^2G(Z_s)^{2+2\delta}d[M]_s\leq (2+\delta)^2r[M]_{t\wedge\rho_r}\leq (2+\delta)^2rC't$$
    Therefore the process $M_{\rho_r\wedge\cdot}$ is actually a true martingale. Now notice that
    \begin{align}
        \int_0^{t}C''G(Z_{s\wedge\rho_r})^{1-\beta+\delta}ds=&\int_0^{t\wedge\rho_r}C''G(Z_{s})^{1-\beta+\delta}ds+\int_{\rho_r}^tG(Z_{\rho_r})^{1-\beta+\delta}ds\notag\\
        \geq&\int_0^{t\wedge\rho_r}C''G(Z_{s})^{1-\beta+\delta}ds\label{eq:t-rho_r}
    \end{align}
    Stopping \eqref{eq:G inequality} and using \eqref{eq:t-rho_r}, we get
    \begin{align}
        G(Z_{t\wedge\rho_r})^{2+\delta}\leq& G(z)^{2+\delta}+\int_0^{t}C''G(Z_{s\wedge\rho_r})^{1-\beta+\delta}ds+M'_{t\wedge\rho_r}\notag\\
        =& G(z)^{2+\delta}+\int_0^{t}C''\parent{G(Z_{s\wedge\rho_r})^{2+\delta}}^{\frac{1-\beta+\delta}{2+\delta}}ds+M'_{t\wedge\rho_r}\label{eq:stopped G inequality}
    \end{align}
    Now consider the function $u(x)=x^{\frac{1-\beta+\delta}{2+\delta}}$ and notice that since $\beta>-1$, $u(x)$ is concave, indeed
    $$\frac{1-\beta+\delta}{2+\delta}<1\iff \beta>-1$$
    Therefore we can take expectations in \eqref{eq:stopped G inequality} and get
    \begin{align*}
        \E_z[G(Z_{t\wedge\rho_r})^{2+\delta}]\leq&G(z)^{2+\delta}+\int_0^{t}C''\E_z\left[u\parent{G(Z_{s\wedge\rho_r})^{2+\delta}}\right]ds\\
        \leq&G(z)^{2+\delta}+\int_0^tC''u\parent{\E_z[G(Z_{s\wedge\rho_r})^{2+\delta}]}ds
    \end{align*}
    Thus we can apply Lemma \ref{lemma:non-lin gronwall} with $f(t):=\E_z[G(Z_{t\wedge\rho_r})^{2+\delta}]<\infty$, $\Phi(t)=C''$ and thus 
    $$\Psi(x)=\frac{2+\delta}{1+\beta}x^{\frac{1+\beta}{2+\delta}}$$
    and get 
    \begin{equation}\label{eq:A}
        \E_z[G(Z_{t\wedge\rho_r})^{2+\delta}]=f(t)\leq\parent{G(z)^{1+\beta}+C''t\frac{1+\beta}{2+\delta}}^{\frac{2+\delta}{1+\beta}}
    \end{equation}
    Finally, using Fatou's lemma, since $\rho_r\to\infty$ a.s.
    \begin{equation}\label{eq:upbound}
            \E_z\left[G(Z_{t})^{2+\delta}\right]\leq\liminf_{r\to\infty}\E_z[G(Z_{t\wedge\rho_r})^{2+\delta}]\leq\parent{G(z)^{1+\beta}+C''t\frac{1+\beta}{2+\delta}}^{\frac{2+\delta}{1+\beta}}\leq c_1+c_2t^{\frac{2+\delta}{1+\beta}}
    \end{equation}
    Where $c_1:=2^{\frac{2+\delta-1-\beta}{1+\beta}}G(z)^{2+\delta}$ and $c_2:=2^{\frac{2+\delta-1-\beta}{1+\beta}}\parent{C''\frac{1+\beta}{2+\delta}}^{\frac{2+\delta}{1+\beta}}$
    By point (3) in Lemma \ref{lemma:properties of G}, we have $X_t\leq G(Z_t)$ for all $t\geq0$, thus if we define $U_t\coloneqq t^{-\frac{2}{1+\beta}}X_t^2$, we get that
    \begin{equation}     \sup_{t\geq1}\E_zU_t^{1+\delta/2}\leq\sup_{t\geq1}\E_z\left[\frac{G(Z_t)^{2+\delta}}{t^{\frac{2+\delta}{1+\beta}}}\right]\leq c_1+c_2
    \end{equation}
    Which implies that the family $\graf{U_t:t\geq1}$ is uniformly integrable, which together with the a.s. convergence
    \begin{equation}\label{eq:as supdif}
        U_t\to \parent{s_0\sigma^2(\beta+1)/(2c_0a_{\infty})}^{2/(\beta+1)}
    \end{equation}
    given in \cite[Theorem 2.2]{Mijatovic1}, implies that $U_t$ converges in $L^1(\Omega,\Pb_z)$ to the constant given by $\parent{s_0\sigma^2(\beta+1)/(2c_0a_{\infty})}^{2/(\beta+1)}$, and this holds for all $z\in\cD$. This immediately implies the second moment convergence, since
    \begin{equation}\label{eq:moment convergence}
        \lim_{t\to\infty}\frac{\E_zX_t^2}{t^{\frac{2}{1+\beta}}}=\lim_{t\to\infty}\E_z\abs{U_t}=\parent{\frac{s_0\sigma^2(\beta+1)}{(2c_0a_{\infty})}}^{\frac{2}{\beta+1}}
    \end{equation}
    and the $L^2$-norm convergence, plus the a.s. convergence imply the $L^2$  convergence. Finally notice that
    \begin{equation}\label{eq:X and |Z|}
        X_t^2\leq\norm{Z_t}_{d+1}^2=X_t^2+\|Y_t\|_d^2\leq X_t^2+b(X_t)^2.
    \end{equation}
Since $b(x)=o(x)$ as $x\to\infty$ and $b$ is bounded on
compact intervals, for every $\varepsilon>0$ there exists
$C_\varepsilon<\infty$ such that
$b(x)^2\leq\varepsilon x^2+C_\varepsilon$ for all $x\geq0$.
By~\eqref{eq:X and |Z|} we obtain 
\[
0\leq
\frac{\E_z\norm{Z_t}_{d+1}^2}{\E_zX_t^2}-1
\leq \varepsilon+\frac{C_\varepsilon}{\E_zX_t^2}.
\]
Since $\E_zX_t^2\to\infty$, taking $t\to\infty$ and then
$\varepsilon\downarrow0$ yields
$\E_z\norm{Z_t}_{d+1}^2\sim\E_zX_t^2$.
\end{proof}

To deal with the proof of Theorem \ref{thm:indep subordinator}, it is useful to recall (see \cite{bingham1987regular}) that a function $f$ is said to be O-regularly varying ($OR$) at $0$ if it is positive, measurable and such that
\begin{equation}\label{eq:OR}
    0<\liminf_{x\to0^+}\frac{f(c x)}{f(x)}\leq\limsup_{x\to0^+}\frac{f(c x)}{f(x)}<\infty\quad c\geq1.
\end{equation}
We will also use the following lemma.
\begin{lemma}\label{lemma:asymptotics of the integral}
    Let $f,g:\R_+\to\R_+$ be measurable and locally bounded functions such that $g(t)\to\infty$ and $f\sim g$ at infinity; moreover let $(\mu_t)_{t\geq0}$ be a sequence of probability measures such that $\mu_t(K)\to0$ as $t\to\infty$ for $K$ compact. Let $F(t)=\int_0^{\infty}f(u)\mu_t(du)$, $G(t)=\int_0^{\infty}g(u)\mu_t(du)$ and assume additionally that $G(t)<\infty$ for all sufficiently
large $t$. Then $F\sim G$ as $t\to\infty$.
\end{lemma}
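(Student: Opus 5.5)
The idea is to split the integral at a large threshold $M$, bound the compact part by $\mu_t([0,M])$, and use $f\sim g$ on the tail, exploiting $g\to\infty$ so that $G(t)\to\infty$ swamps the compact contribution. Fix $\varepsilon\in(0,1)$. Since $f\sim g$ and $g(t)\to\infty$ at infinity, choose $M=M_\varepsilon$ such that for $u>M$ we have
\[
(1-\varepsilon)g(u)\leq f(u)\leq(1+\varepsilon)g(u)\quad\text{and}\quad g(u)\geq 1 .
\]
By local boundedness, $A_M:=\sup_{[0,M]}(f+g)<\infty$. Then split
\[
F(t)=\int_{[0,M]}f\,d\mu_t+\int_{(M,\infty)}f\,d\mu_t ,
\]
and similarly for $G$. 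The compact parts are bounded by $A_M\,\mu_t([0,M])$, which tends to $0$ by hypothesis. The tail parts satisfy
\[
(1-\varepsilon)\int_{(M,\infty)}g\,d\mu_t\leq\int_{(M,\infty)}f\,d\mu_t\leq(1+\varepsilon)\int_{(M,\infty)}g\,d\mu_t ,
\]
so in particular $F(t)<\infty$ whenever $G(t)<\infty$.

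The key step is to show $G(t)\to\infty$, so that the compact contributions are negligible relative to $G$. For any $N>M$, pick $R_N$ such that $g\geq N$ on $(R_N,\infty)$. Then
\[
G(t)\geq N\,\mu_t\bigl((R_N,\infty)\bigr)=N\bigl(1-\mu_t([0,R_N])\bigr)\to N ,
\]
using that each $\mu_t$ is a probability measure and $\mu_t([0,R_N])\to0$. Hence $\liminf_{t\to\infty} G(t)\geq N$ for every $N$, so $G(t)\to\infty$.

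Combining, for $t$ large,
\[
\frac{F(t)}{G(t)}\leq\frac{A_M\mu_t([0,M])+(1+\varepsilon)G(t)}{G(t)}\to1+\varepsilon ,
\]
and
\[
\frac{F(t)}{G(t)}\geq\frac{(1-\varepsilon)\bigl(G(t)-A_M\mu_t([0,M])\bigr)}{G(t)}\to1-\varepsilon .
\]
Letting $\varepsilon\downarrow0$ gives $F\sim G$.

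The only delicate point is the divergence $G(t)\to\infty$ established in the key step; it follows directly from $g\to\infty$ together with the escape of mass $\mu_t(K)\to0$. The finiteness assumption on $G(t)$ is needed only so that the ratio $F/G$ is well defined.
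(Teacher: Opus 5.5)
Your proof is correct and follows essentially the same route as the paper. Both show $G(t)\to\infty$ from $g\to\infty$ and the escape of mass, split at a threshold, control the compact part by local boundedness and $\mu_t([0,M])\to0$, and use $f\sim g$ on the tail; the only cosmetic difference is that you bound $F/G$ above and below separately, whereas the paper bounds $|F(t)-G(t)|\leq\frac{\varepsilon}{2}+\frac{\varepsilon}{2}G(t)$ directly.
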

\begin{proof}
    First notice that $G(t)\to\infty$ as $t\to\infty$; indeed for any fixed $K>0$ there exists $V$ big enough such that $g(u)>2K$ for $u>V$, and a $S=S(V)$ big enough such that $\mu_t([0,V])<1/2$ for $t>S$, thus 
    $$G(t)=\int_0^Vg(u)\mu_t(du)+\int_V^{\infty}g(u)\mu_t(du)\geq2K\mu_t([V,\infty))\geq K$$
    Now fix $\varepsilon>0$, then let $U=U(\varepsilon)$ be such that $|f(u)-g(u)|<g(u)\varepsilon/2$ for $u>U$ and $T=T(U,\varepsilon)$ such that $\mu_t([0,U])<\varepsilon/(2M)$ for $t>T$, where $M=\sup_{[0,U]}|f(u)-g(u)|<\infty$ by the local boundedness assumption, then  
    $$\abs{F(t)-G(t)}\leq M\mu_t([0,U])+\int_U^{\infty}\abs{f(u)-g(u)}\mu_t(du)\leq\frac{\varepsilon}{2}+\frac{\varepsilon}{2}G(t).$$
    Now notice that by the first part we can assume $G(t)>1$, so that dividing by $G(t)$ the above we get 
    $$\abs{\frac{F(t)}{G(t)}-1}\leq \frac{\varepsilon}{2G(t)}+\frac{\varepsilon}{2}\leq\varepsilon.$$
\end{proof}
\begin{proof}[Proof of Proposition \ref{prop:indep subordinator} and Theorem \ref{thm:indep subordinator}]
    Let $m_z(t):=\E_zX_t^2$ and, by independence,
    $$M_z(t):=\E_z\TX^2_{t}=\E_zX_{T_t}^2=\int_0^{\infty}m_z(u)\Pb(T_t\in du),$$
    where $\TX$ is the first coordinate of $\TZ$, and the second inequality is justified by Corollary \ref{cor:rep of sol}. Notice that $m_z(u)\sim (\kappa u)^{\frac{2}{1+\beta}}$, where $m_z(u)$ is locally bounded by \eqref{eq:upbound} with $\delta=0$ and $\kappa$ is the constant in the RHS of \eqref{eq:moment convergence}. Moreover we have that $\mu_t(du):=\Pb(T_t\in du)$ satisfies the assumption of the lemma since
    $$\mu_t([0,U])=\Pb(T_t\leq U)=\Pb(S_U>t)\to0\quad\text{as }t\to\infty$$
    since we assume the subordinator to be unkilled. 
    
Applying Lemma \ref{lemma:asymptotics of the integral} to the function $U(t):=\E T_t^{\frac{2}{1+\beta}}$, requires integrability of $g(u)=(\kappa u)^{2/(1+\beta)}$ with respect to $\mu_t(du)$ for all large times $t$.
Since, for $\lambda>0$ and $u\geq0$, we have 
$\Pb(T_t>u)\leq\Pb(S_u\leq t)
\leq e^{\lambda t}\E e^{-\lambda S_u}
=e^{\lambda t-u\varphi(\lambda)}$,
for every $p>0$, we obtain
\[
\E T_t^p
=p\int_0^\infty u^{p-1}\Pb(T_t>u)\,du
\leq
\frac{e^{\lambda t}\Gamma(1+p)}{\varphi(\lambda)^p}
<\infty.
\]
Taking $p=2/(1+\beta)$ verifies the additional assumption
for $g(u)$.
    
Thus, by Lemma \ref{lemma:asymptotics of the integral} we get that 
    \begin{equation}\label{eq:inverse subordinator moment}
        M_z(t)\sim \kappa^{p}\int_0^{\infty}u^{p}\Pb(T_t\in du)=\kappa^p\E T_t^{p}=\kappa^p U(t).
    \end{equation}
    Now recall (see \cite{taqqu} formula (7)) that the Laplace transform of the $p$-moment of an inverse subordinator is given by\footnote{Note that \cite{taqqu} formula (7) contains a typo, and multiplies the formula by an extra $p$.}
    $$\Tilde{U}(\lambda)=\int_0^{\infty}e^{-\lambda t}U(t)dt=\frac{\Gamma(1+p)}{\lambda\varphi(\lambda)^{p}},$$
    thus the Laplace-Stieltjes transform is $\widehat{U}(\lambda)=\Gamma(1+p)\varphi(\lambda)^{-p}$.
    By \cite[Theorem 2.10.2]{bingham1987regular}, since $U$ is clearly non-decreasing, we have that if $\widehat{U}(\frac{1}{\cdot})$ is $OR$, then $\widehat{U}(\frac{1}{t})\asymp U(t)$ as $t\to\infty$. But notice that for $c\geq1$
    $$\frac{\widehat{U}(1/(c\lambda))}{\widehat{U}(1/(\lambda))}=\parent{\frac{\varphi(1/\lambda)}{\varphi(1/(c\lambda))}}^{p}=\parent{\frac{\varphi(cy)}{\varphi(y)}}^{p}$$
    where $y=1/(\lambda c)$. Now since $\varphi$ is a Bernstein function, it is increasing and sub-additive (see \cite[Corollary 3.8]{schilling2012bernstein}), thus if we set $\N\ni n=\lceil c\rceil\geq c$ we get $n\varphi(y)\geq\varphi(ny)\geq\varphi(cy)$, thus for all $y$
    $$1\leq\frac{\varphi(cy)}{\varphi(y)}\leq n.$$
    Taking the liminf and the limsup for $y\to0$ is equivalent to take them for $\lambda\to\infty$, thus we see that \eqref{eq:OR} is satisfied and $\widehat{U}(1/\cdot)$ is $OR$, and thus
    $$M_z(t)\asymp \kappa^pU(t)\asymp \kappa^p\widehat{U}(1/t)=\kappa^p\frac{\Gamma(1+p)}{(\varphi(1/t))^p}$$
    which proves Proposition \ref{prop:indep subordinator}.
    
    To prove Theorem \ref{thm:indep subordinator}, it is sufficient to notice that if $\varphi(t)\sim ct^{\alpha}\ell(t)$, with $\ell$ slowly varying at zero, then $\widehat{U}(1/t)$ is such that
    $$\widehat{U}(1/t)=\Gamma(1+p)(\varphi(1/t))^{-p}\sim\Gamma(1+p)c^{-p}(1/t)^{-\alpha p}(\ell(1/t))^{-p}\quad t\to\infty.$$
    It follows from \cite[Theorem 1.7.1]{bingham1987regular} that
    $$U(t)\sim\frac{1}{\Gamma(1+\alpha p)}\widehat{U}(1/t).$$
    Thus, combining this with \eqref{eq:inverse subordinator moment}, we get
    $$M_z(t)\sim \frac{\kappa^p\Gamma(1+p)c^{-p}}{\Gamma(1+\alpha p)}t^{\alpha p}(\ell(1/t))^{-p}.$$
    With the same reasoning of equation \eqref{eq:X and |Z|}, we conclude that the same asymptotic holds for $\E_z\norm{\TZ_t}_{d+1}^2$.\\
    Now we just need to cover the proof of the first part of Theorem \ref{thm:indep subordinator}, about explosion. Since $S$ is finite and $S_{\infty}:=\lim_{t\to\infty}S_t=\infty$ a.s., we have that 
    $$\TE=S_{\taue-}<\infty\iff\taue<\infty\iff B(\infty)<\infty\iff\beta<-1$$
    Now assume $\beta<-1$, then
    \begin{align*}
        \E_z[\TE]=&\int_0^{\infty}\E[S_{u-}]\Pb_z(\taue\in du)=\E[S_1]\int_0^{\infty}u\Pb_z(\taue\in du)=\E[S_1]\E_z[\taue],
    \end{align*}
    and the above is finite if and only if $\E[S_1]<\infty$. In particular, we used independence and the fact that by monotone convergence and properties of Lévy processes
    $$\E[S_{t-}]=\E[\lim_{\varepsilon\to0}S_{t-\varepsilon}]=\lim_{\varepsilon\to0}\E[S_{t-\varepsilon}]=t\E[S_1].$$  
 \end{proof}

\section{Proof of Theorem \ref{thm:existence and uniqueness}}
\label{sec:existence}
First of all we establish some technical, but rather mild assumptions on the domain $\cD$ and the vector field $\phi$. We require that there exists an increasing sequence $(\cD^{(n)})_{n}$ of $\cC^3$ and bounded domains, and on each of them we want to be able to put a $\cC^2_b$ vector field $\phi^{(n)}$ satisfying
\begin{equation}\label{eq:vector field}
    \inf_{x\in\partial\cD^{(n)}}\inn{\phi^{(n)}(x),n^{(n)}(x)}>0,
\end{equation}
where $n^{(n)}(x)$ denotes the inward pointing unit normal at $x\in\partial \cD^{(n)}$. In particular we require that the sequence $(\cD^{(n)})_n$ is approximating $\cD$, i.e. $\cD=\cup_n\cD^{(n)}$, and that for any $r>0$ there exists $N_r$ such that
\begin{equation}\label{eq:domain exhaustion1}
     \cD\cap B_r=\cD^{(n)}\cap B_r,\quad B_r\cap\partial\cD=B_r\cap\partial\cD^{(n)},\quad\phi^{(n)}(z)=\phi(z)\quad z\in B_r\cap\partial\cD^{(n)},
\end{equation}
for all $n\geq N_r$, where $B_r$ is the closed ball of radius $r$. 
\begin{rem}\label{rem:domain example}
    These assumptions might seem a bit technical, but they are actually quite broad and exclude only domains with pathological behaviors far from the origin. Indeed we are just asking that the domain can be approximated by bounded domains obtained by intersecting the original with a ball and then adding a "cap" to smooth out the angles, and that on their boundary we can extend the vector field smoothly.

    For generalized parabolic domains, the same localization by bounded
domains and compatible reflection fields is used in
\cite[proof of Thm~A.1]{Mijatovic1}. Their approximating domains
are taken to be $\cC^2$; under our $\cC^3$ boundary assumption,
the caps can be chosen $\cC^3$, with $\cC^2$ inward-pointing
reflection fields agreeing with $\phi$ on the unchanged boundary.
    \begin{figure}[ht!]
        \centering
        \includegraphics[width=0.9\linewidth]{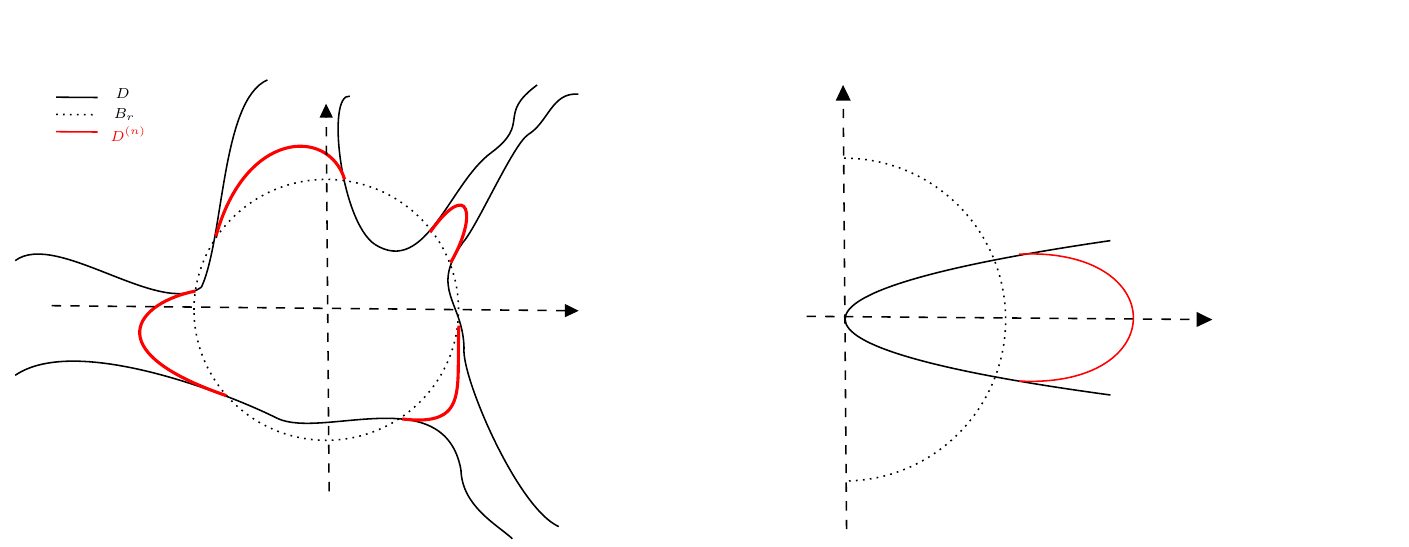}
        \caption{\textit{On the left a generic $\cC^3(\R^2)$ domain, with one of the approximating sets $\cD^{(n)}$. On the right a parabolic domain, as the ones treated in \cite{Mijatovic1}.}}
        \label{fig:1}
    \end{figure}
\end{rem}

We will treat \eqref{eq:TCRSDE} in the general context of SDEs driven by continuous semimartingales with reflection, which have the general form
\begin{equation}\label{eq:semimartingale RSDE}
    X_t=x+\int_0^tf(X_{s})dZ_s+\int_0^t\phi(X_s)dK_s,\quad K_t=\int_0^t\I(X_s\in\partial\cD)dK_s.
\end{equation}
Here $Z$ is a semimartingale with respect to its natural filtration\footnote{All the filtration considered in the paper are assumed to be augmented with the null sets and right-continuous, so that they satisfy the usual conditions.} and has continuous paths and $f$ is a càdlàg function. A strong solution to \eqref{eq:semimartingale RSDE} is a pair $(X,K)$ of continuous processes defined on the same probability space of $Z$, adapted to $\cF^Z$, and such that $\Pb(X\in\overline{\cD})=1$ and $K$ is non-decreasing, thus of bounded variation, and they satisfy almost surely \eqref{eq:semimartingale RSDE}. In particular, a solution to \eqref{eq:semimartingale RSDE} is pathwise unique if any two solutions $(X,K),(X',K')$ defined on the same probability space satisfy 
$$\Pb\parent{(X_t,K_t)=(X_t',K_t'),\ \forall\,t\geq0}=1$$
First of all we need to establish whether the driving process in \eqref{eq:TCRSDE} is a semimartingale with respect to its natural filtration. To answer this question we will need the following.
\begin{thm}[Stricker's Theorem, \cite{protter2005stochastic} Theorem II.2.4]\label{prop:Stricker+}
    Let $\cH$ be a filtration and $\cG$ a sub-filtration; moreover let $Z$ be a continuous $\cH$-semimartingale, which is also adapted to $\cG$ then $Z$ is a $\cG$-semimartingale.
\end{thm}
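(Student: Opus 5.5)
The plan is to argue through the characterization of semimartingales as \emph{good integrators} (the Bichteler--Dellacherie--Mokobodzki theorem), since this characterization depends on the filtration only through the class of simple predictable integrands. Shrinking the filtration shrinks that class, so the relevant continuity property passes from $\cH$ to $\cG$ almost for free. A direct attack through the decomposition $Z=Z_0+N+A$ under $\cH$ does not work well: the local martingale part $N$ and the finite-variation part $A$ are in general only $\cH$-adapted, not $\cG$-adapted. One could try to project them onto $\cG$, but optional projections of local martingales need not be local martingales without extra integrability, so I avoid that route.

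First, recall that for a filtration $\mathcal{K}$ the space $\mathbf{S}(\mathcal{K})$ of simple predictable processes consists of processes
\[
H=H_0\I_{\{0\}}+\sum_{i=1}^{n}H_i\I_{(\tau_i,\tau_{i+1}]},
\]
where the $\tau_i$ are finite $\mathcal{K}$-stopping times and each $H_i$ is bounded and $\mathcal{K}_{\tau_i}$-measurable. For such $H$ set
\[
I_Z(H)=H_0Z_0+\sum_i H_i\,(Z_{\tau_{i+1}}-Z_{\tau_i}).
\]
Following Protter, $Z$ is a (total) $\mathcal{K}$-semimartingale if it is càdlàg and $\mathcal{K}$-adapted, and, for every $t$, the stopped map $H\mapsto I_{Z^t}(H)$ is continuous from $\mathbf{S}(\mathcal{K})$ with the uniform topology into $L^0$ with convergence in probability.

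Second, I check the inclusion $\mathbf{S}(\cG)\subseteq\mathbf{S}(\cH)$. Since $\cG_s\subseteq\cH_s$ for all $s$, every $\cG$-stopping time is an $\cH$-stopping time, and $\cG_\tau\subseteq\cH_\tau$ for such $\tau$. Hence every $\cG$-simple predictable process is also $\cH$-simple predictable. Moreover $I_{Z^t}(H)$ is given by the same pathwise formula whichever filtration is used.

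Third, I conclude the argument. Because $Z$ is an $\cH$-semimartingale, the map $I_{Z^t}$ is continuous on $\mathbf{S}(\cH)$. Its restriction to the subspace $\mathbf{S}(\cG)$, carrying the induced uniform topology, is therefore continuous as well. Together with the hypothesis that $Z$ is $\cG$-adapted and continuous, this shows $Z$ is a total $\cG$-semimartingale. Applying Bichteler--Dellacherie in $\cG$ then yields a decomposition $Z=Z_0+N'+A'$ with $N'$ a $\cG$-local martingale and $A'$ a $\cG$-adapted process of finite variation.

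The step I expect to need the most care is the equivalence between "good integrator" and the classical definition (local martingale plus finite variation). This is exactly the deep Bichteler--Dellacherie theorem, which I would cite rather than reprove. If one additionally wants continuous $N'$ and $A'$ (relevant here, since $Z$ is continuous), I would invoke the fact that a continuous semimartingale is special. Its canonical decomposition then has a predictable $A'$, and since $Z$ has no jumps, $A'$ and hence $N'$ can be taken continuous.
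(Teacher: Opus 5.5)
Your proposal is correct and follows essentially the same argument as the source: the paper gives no proof of its own and cites Protter, whose proof is this one. Since $\mathbf{S}(\mathcal{G})\subseteq\mathbf{S}(\mathcal{H})$ with the induced uniform topology, continuity of $H\mapsto I_{Z^t}(H)$ passes to the restriction, and Bichteler--Dellacherie (together with the continuous special-semimartingale remark) is needed only to return to the classical decomposition.
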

Now we precise the definition of time change and of stopped filtration, which will be useful for the sequel.
\begin{defi}
    Given probability space $(\Omega,\cF,\Pb)$, and a filtration $\cH=(\cH_t)_{t\geq0}$, we say that a process $T=(T_t)_{t\geq0}$ is an $\cH$-time change, if it has càdlàg, non-decreasing  trajectories, and is such that $T_0=0$ a.s. and each $T_t$ is an $\cH$-stopping time; in particular, we say it is finite if $T_t<\infty$ a.s. for all $t\geq0$. 
\end{defi}
\begin{rem}\label{rem:stopped filtration}
    Recall that, given a filtration $\cH$, and an $\cH$ time-change $T$, we can define the stopped filtration $\cG=(\cG_t)$ where
    $$\cG_t:=\cH_{T_t}=\graf{A:A\cap\graf{T_t\leq s}\in\cH_s\ \forall\,s\geq0},$$
    In particular, the process $T$ is $\cG$-adapted and given an $\cH$-semimartingale $Z$, then $Z\circ T$ it is a $\cG$-semimartingale (see \cite[Proposition V.1.5]{revuz2013continuous} or \cite[Lemma 2.2]{kobayashi2011}). The natural question we want to answer is whether $(Z_{T_t})_t$ is a semimartingale also with respect to its natural filtration. The following lemma gives an affirmative answer, and thus guarantees that the question of a strong solution to an SDE driven by a time-changed semimartingale is actually well-posed.
\end{rem}
\begin{lemma}\label{lemma:Stricker2}
    Let $\cH=(\cH_t)_{t\geq0}$ be a filtration such that $Z$ is a $\cH$-semimartingale and $T$ is a finite $\cH$-time change, then $Z\circ T$ is a semimartingale with respect to its natural filtration.
\end{lemma}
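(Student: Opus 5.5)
The plan is to reduce the claim to Stricker's theorem (Theorem~\ref{prop:Stricker+}). Set $\cG_t:=\cH_{T_t}$, the stopped filtration of Remark~\ref{rem:stopped filtration}, and let $\cF^{Z\circ T}$ be the natural filtration of $Z\circ T$, augmented so that it satisfies the usual conditions. If we show that $Z\circ T$ is a $\cG$-semimartingale and that $\cF^{Z\circ T}_t\subseteq\cG_t$ for all $t$, then Stricker's theorem gives the conclusion directly, since $Z\circ T$ is trivially adapted to its own filtration. (When $Z\circ T$ is only càdlàg, one uses the general form of Stricker's theorem in \cite{protter2005stochastic}, which does not require continuity.)

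\textbf{Step 1: $Z\circ T$ is a $\cG$-semimartingale.} Because $T$ is a finite $\cH$-time change, this is the classical time-change result cited in Remark~\ref{rem:stopped filtration}, \cite[Proposition V.1.5]{revuz2013continuous} or \cite[Lemma 2.2]{kobayashi2011}. If one wants to check it by hand, decompose $Z=Z_0+M+A$, localize $M$ so that it is bounded, and use optional stopping along the stopping times $T_s\le T_t$. This shows that $M\circ T$ is a $\cG$-local martingale. Meanwhile $A\circ T$ is of finite variation on compacts, because $T$ is nondecreasing and finite.

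\textbf{Step 2: $\cF^{Z\circ T}\subseteq\cG$.} It suffices to show that $Z_{T_s}$ is $\cH_{T_s}$-measurable, and that $\cH_{T_s}\subseteq\cH_{T_t}$ for $s\le t$. The inclusion holds because $T_s\le T_t$ are stopping times. For the measurability, $Z$ is $\cH$-adapted with càdlàg (in our application continuous) paths, hence progressively measurable. The standard fact that a progressive process evaluated at a stopping time is measurable with respect to the stopped $\sigma$-field then gives that $Z_{T_s}\I_{\{T_s<\infty\}}$ is $\cH_{T_s}$-measurable, and $T_s<\infty$ almost surely by finiteness. Since $\cG$ satisfies the usual conditions (null sets are included because $\cH$ is augmented, and right-continuity of $t\mapsto\cH_{T_t}$ follows from the right-continuity of $T$ and of $\cH$), the augmented natural filtration of $Z\circ T$ is still contained in $\cG$.

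\textbf{Main obstacle.} The only delicate point is this filtration bookkeeping. Measurability of $Z_{T_s}$ on the null set $\{T_s=\infty\}$ has to be handled, and one must confirm that augmenting and right-regularizing $\cF^{Z\circ T}$ does not take it outside $\cG$. I would prove right-continuity as follows: $\bigcap_{u>t}\cH_{T_u}=\cH_{T_t}$, because $T_u\downarrow T_t$ and $\cH$ is right-continuous (\cite[Lemma 2.2]{kobayashi2011}). Once these inclusions are established, Stricker's theorem, applied with $\cH$ replaced by $\cG$ and the sub-filtration $\cF^{Z\circ T}$, finishes the proof.
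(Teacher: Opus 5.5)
Your proposal is correct and follows the paper's proof. Both arguments take the $(\cH_{T_t})$-semimartingale property of $Z\circ T$ from the result cited in Remark~\ref{rem:stopped filtration}, show $\cF^{Z\circ T}_t\subseteq\cH_{T_t}$ via $Z_{T_s}\in\cH_{T_s}\subseteq\cH_{T_t}$ for $s\leq t$, and conclude by Stricker's theorem; your treatment of progressive measurability, null sets and right-continuity of $t\mapsto\cH_{T_t}$ fills in details the paper leaves implicit.

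One caution: your optional by-hand check of Step~1 only works for continuous $T$, where $T_{t\wedge\sigma_n}=T_t\wedge\tau_n$ as in Lemma~\ref{lemma:local martingale}. A jumping time change can overshoot $\tau_n$, and then $M\circ T$ need not be an $(\cH_{T_t})$-local martingale, so for general càdlàg $T$ you must rely on the cited result.
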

\begin{proof}
    From the above remark $Z\circ T$ is a $(\cH_{T_t})$-semimartingale. Since $Z$ is $\cH$-adapted and $T_s$ is an $\cH$-stopping time, we have $Z_{T_s}\in\cH_{T_s}\subset\cH_{T_t}$ for all $s\leq t$; therefore the $\sigma$-algebra generated by the family $(Z_{T_s})_{s\leq t}$, must be contained in $\cH_{T_t}$, i.e. $\cF^{Z\circ T}_t=\sigma(Z_{T_s};s\leq t)\subseteq\cH_{T_t}$. Therefore $\cF^{Z\circ T}$ is a sub-sigma-algebra of $(\cH_{T_t})$ to which $Z\circ T$ is adapted, and thus by Stricker's theorem (see Proposition \ref{prop:Stricker+}), $Z\circ T$ must be a an $\cF^{Z\circ T}$-semimartingale.
\end{proof}
Hence \eqref{eq:TCRSDE} can be rewritten as \eqref{eq:semimartingale RSDE}, by considering $f=(b,\sigma^1,...,\sigma^d)$ and $Z=(T,W\circ T)$ which, is a continuous semimartingale, with respect to $\cF^{W\circ T}$. Indeed,  by Lemma \ref{lemma:Stricker2}, we have that $W\circ T$ is a $\cF^{W\circ T}$ semimartingale, and $T_t=[W^1\circ T]_t\in\cF^{W\circ T}_t$ for all $t\geq0$, where $W^1$ is the first coordinate of $W$. More specifically we have the following.
\begin{lemma}\label{lemma:local martingale}
    If $(W,\cH)$ is a Brownian motion and $T$ is a continuous finite $\cH$-time change, then $W\circ T$ is a $(\cH_{T_t})$-continuous local martingale.
\end{lemma}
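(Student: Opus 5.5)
The plan is to use the time-change theorem for continuous local martingales in its localized form (cf.\ \cite[Proposition V.1.5]{revuz2013continuous}). Throughout, set $\cG_t:=\cH_{T_t}$. Each $T_t$ is an $\cH$-stopping time and $T$ is non-decreasing, so $\cG$ is a filtration satisfying the usual conditions, and $W_{T_t}$ is $\cG_t$-measurable (Remark~\ref{rem:stopped filtration}). Continuity of $t\mapsto W_{T_t}$ is immediate because both $W$ and $T$ are continuous. It therefore remains only to prove the local martingale property, and we may work componentwise, taking $W$ to be a one-dimensional $\cH$-Brownian motion.

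\textbf{Step 1 (localization).} For $n\in\N$ define
\[
\tau_n:=\inf\{t\geq0: T_t\geq n\}.
\]
Since $T$ is continuous and adapted to $\cG$, each $\tau_n$ is a $\cG$-stopping time. Because $T_t<\infty$ almost surely for every $t$, we have $\tau_n\uparrow\infty$. By continuity, $T_{t\wedge\tau_n}=T_t\wedge n$, which is an $\cH$-stopping time bounded by $n$.

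\textbf{Step 2 (optional stopping).} The stopped process $W^n:=W_{\cdot\wedge n}$ is a square-integrable $\cH$-martingale, and it is uniformly integrable. Take $s\le t$. Then $T_s\wedge n\le T_t\wedge n$ are bounded stopping times, so optional stopping gives
\[
\E\bigl[W_{T_t\wedge n}\mid \cH_{T_s\wedge n}\bigr]=W_{T_s\wedge n}.
\]
We then identify $\cH_{T_s\wedge n}$ with $\cG_{s\wedge\tau_n}$. This uses the identity $\cH_{T_{s\wedge\tau_n}}=\cG_{s\wedge\tau_n}$, the standard fact that "the stopped filtration at a $\cG$-stopping time equals $\cH$ at the composed time" for continuous time changes. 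If that identity proves delicate, a weaker statement suffices: $\cG_{s\wedge \tau_n}\subseteq \cH_{T_s\wedge n}$ on the relevant sets. With this inclusion, the tower property gives the martingale property of $M^n_t:=W_{T_{t\wedge\tau_n}}$ with respect to $\cG$, since we may also apply the conditional expectation in $\cG_s$ directly. Concretely, for $A\in\cG_s$ we check that $A\cap\{s<\tau_n\}\in\cH_{T_s\wedge n}$ and handle the complement, where $M^n$ is already frozen.

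\textbf{Step 3 (conclusion).} The processes $M^n$ are continuous $\cG$-martingales, bounded in $L^2$ since $\E[(M^n_t)^2]\le n$, and $\tau_n\uparrow\infty$. Hence $W\circ T$ is a continuous $(\cH_{T_t})$-local martingale. As a byproduct, the same argument applied to $W^2-t$ gives $[W\circ T]=T$, which justifies the identity $T=[W^1\circ T]$ used before the lemma.

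The main obstacle is the measure-theoretic Step 2: matching $\cG_{s\wedge\tau_n}$ with $\cH_{T_s\wedge n}$ so that optional stopping can legitimately be transferred to the filtration $\cG$. I would prove this carefully from the definition $\cG_t=\{A: A\cap\{T_t\le u\}\in\cH_u\ \forall u\}$.
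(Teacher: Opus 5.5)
Your proof is correct. It follows the same scheme as the paper: localize, write the stopped process as an $\cH$-martingale composed with $T$, and apply optional stopping. The difference is the localizing sequence.

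The paper stops at $\sigma_n=\inf\{t:|W_{T_t}|>n\}$ and uses the pathwise identity $T_{t\wedge\sigma_n}=T_t\wedge\tau_n$. Here $\tau_n$ is the exit time of $W$ from the ball of radius $n$, and the identity relies on $T$ mapping $[0,t]$ onto $[0,T_t]$. This gives $(W\circ T)^{\sigma_n}=W^{\tau_n}\circ T$ with $W^{\tau_n}$ a \emph{bounded} $\cH$-martingale. Optional sampling at the finite, possibly unbounded, times $T_s\le T_t$ then gives the conditional expectation given $\cH_{T_s}=\cG_s$ directly, with no filtration matching.

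You stop the clock instead. This makes $T_{t\wedge\tau_n}=T_t\wedge n$ trivial and gives one localizing sequence for all components. It also gives the explicit bound $\E[(M^n_t)^2]=\E[T_t\wedge n]\le n$ and the byproduct $[W\circ T]=T$.

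The cost is your Step 2. The tower-property sentence by itself only gives $\E[M^n_t\mid\cG_{s\wedge\tau_n}]=M^n_s$. Your ``concretely'' argument is what upgrades this to $\cG_s$, and it is valid: for $A\in\cG_s=\cH_{T_s}$ one has $A\cap\{s<\tau_n\}=A\cap\{T_s<n\}\in\cH_{T_s\wedge n}$, while $M^n_t=M^n_s$ on $\{\tau_n\le s\}$. The identity $\cG_\rho=\cH_{T_\rho}$ for $\cG$-stopping times $\rho$ also holds for continuous time changes, so your identification is fine too.

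The obstacle you flag disappears if you use the uniform integrability you already noted. Since $W_{\cdot\wedge n}$ is closed by $W_n$, optional sampling at $T_s\le T_t$ gives $\E[W_{T_t\wedge n}\mid\cH_{T_s}]=W_{T_s\wedge n}$, that is, $\E[M^n_t\mid\cG_s]=M^n_s$ immediately. This is exactly how the paper's argument avoids the issue.
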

\begin{proof}
    Let $\tau_n\coloneqq\inf\graf{t>0:|W_t|>n}$ and define the localizing sequence for the time-changed process $\sigma_n\coloneqq\inf\graf{t>0:|W_{T_t}|>n}$, then we have that $T_{t\wedge\sigma_n}=T_t\wedge\tau_n$, and thus
    $$(W\circ T)^{\sigma_n}=W_{T_{\cdot\wedge\sigma_n}}=W_{T_{\cdot}\wedge\tau_n}=(W^{\tau_n})\circ T.$$
    Integrability and adaptedness to $\cH_{T_t}$ for $(W\circ T)^{\sigma_n}$ are immediate. The martingale property is recovered by Doob's stopping theorem since $W^{\tau_n}$ is a bounded $\cH$-martingale and $T_s\leq T_t$ are finite $\cH$-stopping times, thus $$\E[W_{T_{t\wedge\sigma_n}}|\cH_{T_s}]=\E[(W^{\tau_n})_{T_t}|\cH_{T_s}]=(W^{\tau_n})_{T_s}=W_{T_s\wedge\sigma_n}.$$
\end{proof}
The next theorem will establish strong existence and pathwise uniqueness for \eqref{eq:TCRSDE}, in the setting presented in the introduction, but when $\cD$ is a bounded domain, thus we still don't need to deal with explosion. For the proof, we will use some known facts, which we state here for convenience.
\begin{lemma}[\cite{LionsSznitman},Lemma 4.1, Remark 3.1]\label{lemma:oblique}
Let $\cD$ be a bounded $\cC^3$ open set in $\R^d$, $n(x)$ the inward normal vector at $x$, and $\phi\in\cC_b(\R^d,\R^d)$ such that \ref{eq:vector field} holds.
\begin{enumerate}
    \item There exists a symmetric matrix valued function $A:\R^d\to\R^{d\times d}$, such that
    \begin{equation}\label{eq:nu}
        A(x)\geq\nu'I_d,\quad \nu'>0\quad A(x)\phi(x)=n(x)\ \forall\,x\in\partial\cD
    \end{equation}
    In particular there exists $C_0\geq0$ such that
    $$C_0\norm{x-x'}^2_d-\inn{A(x)\phi(x),(x-x')}\geq0\quad x\in\partial\cD,\ x'\in\overline{\cD}$$
    In addition, if $\phi\in\cC_b^2(\R^d)$ then $A_{ij}\in\cC_b^2(\R^d)$.
    \item If $\phi\in\cC_b^2(\R^d)$, then there exists $\Phi\in\cC_b^2(\R^d)$ such that there exists $\alpha>0$ for which
    \begin{equation}\label{eq:alpha}
        \inn{\nabla\Phi(x),\phi(x)}\geq\alpha>0\quad\forall\,x\in\partial\cD
    \end{equation} 
\end{enumerate}
\end{lemma}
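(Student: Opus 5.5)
We follow the construction behind \cite[Lemma~4.1, Remark~3.1]{LionsSznitman}: first define everything on $\partial\cD$, then extend to $\R^d$ through a tubular neighbourhood of the boundary. Since $\cD$ is bounded with $\cC^3$ boundary, there is $r_0>0$ such that, on $U=\{x:\operatorname{dist}(x,\partial\cD)<r_0\}$, the signed distance $\rho$ (positive inside $\cD$) is $\cC^3$. The nearest-point projection $\pi:U\to\partial\cD$ is $\cC^2$. The inward normal $n$ is $\cC^2$ on $\partial\cD$, and $\nabla\rho=n$ there. Put $c(x):=\inn{\phi(x),n(x)}$. By \eqref{eq:vector field}, $c\geq c_0>0$ on $\partial\cD$, and $\phi$ is bounded.

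\emph{Construction of $A$ on the boundary.} For $x\in\partial\cD$ decompose $\phi=c\,n+\tau$ with $\tau\perp n$. In the orthogonal splitting $\R^d=\operatorname{span}(n)\oplus n^\perp$ take
\[
A=\begin{pmatrix}a & v^{\tra}\\ v & I_{d-1}\end{pmatrix},\qquad v=-\tau/c,\qquad a=\bigl(1+\norm{\tau}^2/c\bigr)/c .
\]
Equivalently,
\[
A=a\,nn^{\tra}+(I-nn^{\tra})+n v^{\tra}+v n^{\tra}.
\]
A direct check gives $A\phi=(ac-\norm{\tau}^2/c)\,n+(cv+\tau)=n$. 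By a Schur complement, $A$ is positive definite iff $a-\norm{v}^2=1/c>0$. Its smallest eigenvalue is bounded below by a positive function of $c\in[c_0,\sup\norm{\phi}]$ and $\norm{\tau}\leq\sup\norm{\phi}$. Uniform ellipticity $A\geq\nu'I_d$ on $\partial\cD$ then follows from compactness, or from an explicit bound.

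\emph{Extension and regularity of $A$.} On $U$ set $\tilde A(x):=A(\pi(x))$, with $n$, $\phi$, $c$ evaluated at $\pi(x)$. Then put
\[
A(x):=\chi(x)\tilde A(x)+(1-\chi(x))I_d,
\]
where $\chi\in\cC^\infty_c(U)$ equals $1$ near $\partial\cD$. A convex combination of matrices bounded below by $\nu'\wedge1$ stays bounded below, so \eqref{eq:nu} holds globally. If $\phi\in\cC^2_b$, then $n\circ\pi$, $\phi\circ\pi$ and $1/c\circ\pi$ are $\cC^2$ on $\operatorname{supp}\chi$, so $A_{ij}\in\cC^2_b$.

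\emph{The inequality.} On $\partial\cD$ we have $A(x)\phi(x)=n(x)$, so it suffices to show
\[
\inn{n(x),x-x'}\leq C_0\norm{x-x'}^2\qquad\text{for } x\in\partial\cD,\ x'\in\overline\cD .
\]
For $\norm{x-x'}\geq r_1$ this is trivial with $C_0\geq\operatorname{diam}(\cD)/r_1^2$. For $\norm{x-x'}<r_1$, write $\partial\cD$ locally as a $\cC^2$ graph over the tangent plane at $x$, with second derivatives uniformly bounded by compactness. Because $x'$ lies on the inner side of the graph, its normal coordinate satisfies $\inn{n(x),x'-x}\geq-K\norm{x-x'}^2$. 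This is the uniform exterior-ball (or $\cC^2$-graph) property of a compact $\cC^2$ boundary.

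\emph{The function $\Phi$.} Take $\Phi:=\chi\rho$. Then $\Phi\in\cC^2_b$, and on $\partial\cD$ we have $\nabla\Phi=\nabla\rho=n$. Hence
\[
\inn{\nabla\Phi,\phi}=c\geq c_0=:\alpha>0,
\]
which is \eqref{eq:alpha}.

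\emph{Main obstacle.} The step I expect to need the most care is the regularity bookkeeping. One must check that a $\cC^3$ boundary gives a $\cC^3$ distance function near $\partial\cD$, hence a $\cC^2$ projection and a $\cC^2$ normal. Only then does composing with $\pi$ preserve $\cC^2$ regularity of $A$, and only then does the uniform quadratic bound hold with a single constant $C_0$. The algebra producing $A$ is explicit and poses no difficulty.
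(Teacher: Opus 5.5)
Your proposal is correct and follows the standard route behind the Lions--Sznitman results, which the paper cites without proving. That route is a pointwise symmetric matrix at the boundary with $A\phi=n$ (your Schur complement $1/c$ gives positivity), extended through the $\cC^2$ nearest-point projection and a cutoff, together with the exterior-ball bound $\inn{n(x),x'-x}\geq -C_0\norm{x-x'}^2$ and $\Phi=\chi\rho$. One small correction to your ``main obstacle'': the quadratic inequality already holds for a compact $\cC^2$ boundary, and the $\cC^3$ assumption is needed only to get $A_{ij}\in\cC^2_b$, as the paper's subsequent remark also notes.
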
 

\begin{rem}
    Notice that this lemma is the reason why we require the domain to be $\cC^3$. The uniqueness argument for oblique reflection exploits the existence of a matrix valued function $A$, that transforms the oblique vector field in a normal vector field. We need $A$ to be $\cC^2(\R^d)$ because we need to apply It\^o's formula. Such regularity generally requires the boundary of the domain to be $\cC^3$. For detail, one can check Lemma 4.1 in \cite{LionsSznitman} and the subsequent discussion. 
\end{rem}

\begin{prop}[\cite{revuz2013continuous} Proposition V.1.4, V.1.5]\label{prop:change of var}
Let $(T_t)_t$ be a continuous finite $\cH$-time change with $T_0=0$, and let $\cG=(\cH_{T_t}):$
    \begin{enumerate}
        \item Suppose $H$ is $\cH$-progressively measurable, then $H\circ T$ is $\cG$-progressively measurable. Moreover if $V$ is of finite variation we have
        $$\int_0^{T_t}H_sdV_s=\int_0^tH_{T_s}dV_{T_s}$$
        \item Suppose that $X$ is an $\cH$-semimartingale, and $H\in L(X,\cH)$, then $(H_{T_{t-}})_{t\geq0}\in L(X\circ T,\cG)$ and a.s.
        $$\int_0^{T_t}H_sdX_s=\int_0^tH_{T_{s-}}dX_{T_s}$$
    \end{enumerate}
\end{prop}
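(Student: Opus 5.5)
The statement is a classical pair of change-of-variables results, and I would prove the two parts separately. In each part I first verify the identity on simple integrands and then extend it by monotone class and limiting arguments. Throughout, $T$ is continuous, so $T_{s-}=T_s$ and the range of $s\mapsto T_s$ on $[0,t]$ is the whole interval $[0,T_t]$; I will use this repeatedly.

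\emph{Part (1).} For measurability, I would begin with generators of the $\cH$-progressive $\sigma$-field, namely processes $H=\I_{(a,b]}(s)\I_A$ with $A\in\cH_a$, together with $\I_{\{0\}}(s)\I_A$ with $A\in\cH_0$. For such $H$,
\[
H_{T_s}=\I_A\I_{\{a<T_s\leq b\}},
\]
and $A\cap\{T_s>a\}\in\cH_{T_s}=\cG_s$, so $H\circ T$ is $\cG$-adapted and left-continuous in $s$, hence $\cG$-progressive. A monotone class argument then extends this to all bounded $\cH$-progressive $H$, and truncation handles general $H$. For the integral identity I would argue pathwise. For each fixed $\omega$, the Lebesgue--Stieltjes measure $d(V\circ T)$ on $[0,t]$ is pushed forward by $s\mapsto T_s$ to $dV$ on $[0,T_t]$. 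It suffices to check this on intervals, where it reads $V_{T_b}-V_{T_a}=dV((T_a,T_b])$. The abstract change of variables for image measures then gives $\int_0^{T_t}H_s\,dV_s=\int_0^tH_{T_s}\,dV_{T_s}$ for bounded measurable $H$, and monotone convergence extends it to all integrable $H$.

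\emph{Part (2).} I would first show that $X\circ T$ is a $\cG$-semimartingale. Write $X=M+A$, where $M$ is a continuous local martingale and $A$ has finite variation. Localizing $M$ by $\tau_n$ and setting $\sigma_n:=\inf\{s:|M_{T_s}|>n\}$, optional stopping shows that $M\circ T$ is a $\cG$-local martingale, exactly as in Lemma~\ref{lemma:local martingale}. Also $A\circ T$ has finite variation, so $X\circ T=M\circ T+A\circ T$ is a $\cG$-semimartingale.

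For elementary predictable $H=\I_{(a,b]}\I_A$ with $A\in\cH_a$, both sides equal $\I_A\bigl(X_{T_t\wedge b}-X_{T_t\wedge a}\bigr)$. On the right this holds because $s\mapsto\I_{\{a<T_s\leq b\}}$ is the indicator of the $s$-interval between the first passages of $T$ above $a$ and above $b$, and these passage times are $\cG$-stopping times. By linearity the identity holds for simple predictable $H$. Dominated convergence for stochastic integrals, applied on both sides, then extends it to locally bounded predictable $H$, because $H^n\to H$ pointwise implies $H^n\circ T\to H\circ T$ pointwise.

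Finally, for general $H\in L(X,\cH)$ I would use the decomposition of $X$. Since $[M\circ T]=[M]\circ T$ (by the identity already proved applied to $M^2$, or directly from Part (1)), Part (1) gives
\[
\int_0^t(H_{T_s})^2\,d[M\circ T]_s=\int_0^{T_t}H_s^2\,d[M]_s<\infty,
\]
and the analogous identity holds for $\int|H|\,|dA|$. Hence $H\circ T\in L(X\circ T,\cG)$. I would then apply the formula to the truncations $H\I_{\{|H|\leq n\}}$ and pass to the limit in $n$ on both sides using stochastic dominated convergence.

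I expect this last extension to general $H\in L(X,\cH)$ to be the main obstacle. The point requiring care is that convergence on the left-hand side in ucp at the random time $T_t$ transfers to ucp convergence on the right. I would handle this through the $L^2$ isometry after localizing $M$ with $\tau_n$, and the corresponding stopped times $\sigma_n$ on the time-changed side.
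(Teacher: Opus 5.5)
There is a genuine gap in the measurability claim of part (1). The processes $\I_{(a,b]}(s)\I_A$ with $A\in\cH_a$, together with $\I_{\{0\}}(s)\I_A$ with $A\in\cH_0$, generate the \emph{predictable} $\sigma$-field. Your monotone class argument therefore shows that $H\circ T$ is $\cG$-progressive (indeed $\cG$-predictable) only for predictable $H$. The predictable $\sigma$-field is in general strictly smaller than the progressive one: the indicator of the graph of a totally inaccessible stopping time is progressive but not predictable. So the claim for progressive $H$ needs a separate argument.

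One argument that works is the following. First show that for an $\cH$-stopping time $\tau$ and progressive $H$, the map $(u,\omega)\mapsto H_{u\wedge\tau(\omega)}(\omega)$ is $\mathcal{B}(\R_+)\otimes\cH_\tau$-measurable. Write it as $H_u\I_{\{u<\tau\}}+H_\tau\I_{\{\tau\le u\}}$. The second term is harmless because $H_\tau$ is $\cH_\tau$-measurable. The first term is the pointwise limit, as $m\to\infty$, of
\[
\sum_{k\geq0}\I_{[k2^{-m},(k+1)2^{-m})}(u)\,H_u\,\I_{\{(k+1)2^{-m}\le\tau\}}.
\]
Each summand is $\mathcal{B}(\R_+)\otimes\cH_\tau$-measurable by a monotone class argument on rectangles. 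This uses two facts: $H$ is $\mathcal{B}([0,r])\otimes\cH_r$-measurable on $[0,r]\times\Omega$, and $F\cap\{r\le\tau\}\in\cH_\tau$ for $F\in\cH_r$.

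Then take $\tau=T_t$. For $s\le t$ you have $H_{T_s}=H_{T_s\wedge T_t}$. The map $(s,\omega)\mapsto(T_s(\omega),\omega)$ is $\mathcal{B}([0,t])\otimes\cH_{T_t}$-measurable, because $T$ is continuous in $s$ and each $T_s$ is $\cH_{T_s}\subseteq\cH_{T_t}$-measurable. Composing shows that $H\circ T$ restricted to $[0,t]\times\Omega$ is $\mathcal{B}([0,t])\otimes\cG_t$-measurable.

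The rest of your outline is sound. The paper itself gives no proof here; it only cites Revuz--Yor.

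\begin{itemize}
\item Your pathwise image-measure proof of the Stieltjes identity does not depend on the measurability claim.
\item Part (2) only involves predictable $H$, and there your monotone class argument does apply. State explicitly that this argument is what makes $H\circ T$ $\cG$-predictable, as membership in $L(X\circ T,\cG)$ requires.
\item You tacitly assume $X$ is continuous. That matches the cited source and the paper's only use of the result, with $X=W$. For c\`adl\`ag $X$, localize instead with $\sigma_n:=\inf\{s:T_s\ge\tau_n\}$, where $\tau_n$ reduces $M$. By continuity of $T$ this still gives $T_{s\wedge\sigma_n}=T_s\wedge\tau_n$.
\item The step you flag as the main obstacle is routine. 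Ucp convergence on each $[0,K]$, together with $\Pb(T_t>K)\to0$ as $K\to\infty$, already gives convergence in probability of the left-hand side evaluated at $T_t$.
\end{itemize}
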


We will also need the following lemma.
\begin{lemma}\label{lemma:measurability}
    Let $W$ be an $\cH$-Brownian motion and $T$ a continuous $\cH$-time change, then the random element satisfies $(T_t,(W_{s\wedge T_t})_{s\geq0})\in\cF^{W\circ T}_t$.
\end{lemma}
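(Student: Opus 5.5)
The idea is to use the characterization of the stopped $\sigma$-algebra: $\cF^{W\circ T}_t$ is the natural $\sigma$-algebra of the time-changed path $(W_{T_s})_{s\le t}$, and we must show that $T_t$ and the stopped Brownian path $(W_{s\wedge T_t})_{s\ge0}$ are measurable functions of that path. The argument has two steps.

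\textbf{Step 1: $T_t$ is measurable.} $T_t\in\cF^{W\circ T}_t$ was already observed before Lemma \ref{lemma:local martingale}. By that lemma, $W\circ T$ is a continuous local martingale with $[W^1\circ T]_t=T_t$; this follows from Proposition \ref{prop:change of var} or from the fact that time changes commute with quadratic variation. Quadratic variation is a pathwise limit in probability of sums of squared increments over partitions of $[0,t]$. Taking dyadic partitions, we can extract an almost surely convergent subsequence. Hence $T_t$ equals, almost surely, a measurable function of $(W_{T_s})_{s\le t}$. Since filtrations are augmented, $T_t\in\cF^{W\circ T}_t$.

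\textbf{Step 2: the stopped Brownian path is measurable.} We show that for each fixed $r\ge0$, $W_{r\wedge T_t}\in\cF^{W\circ T}_t$. The key point is that $T$ is continuous and $T_0=0$, so $s\mapsto T_s$ maps $[0,t]$ onto $[0,T_t]$. Hence every $v\in[0,T_t]$ has the form $T_{s}$ for some $s\le t$, and in particular $v=T_{\sigma(v)}$ where $\sigma(v):=\inf\{s\ge0:T_s\ge v\}$ is the generalized inverse. Therefore
\[
W_{r\wedge T_t}=W_{T_{\sigma(r\wedge T_t)}},\qquad \sigma(r\wedge T_t)\le t.
\]
The time $\sigma(r\wedge T_t)$ is built from $(T_s)_{s\le t}$, which is $\cF^{W\circ T}_t$-measurable by Step 1. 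Being an infimum of the continuous process $T$ over rationals, $\sigma(r\wedge T_t)$ is an $\cF^{W\circ T}_t$-measurable random variable with values in $[0,t]$.

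Evaluating the continuous, jointly measurable process $(s,\omega)\mapsto W_{T_s}(\omega)$ on $[0,t]\times\Omega$ at this random time gives an $\cF^{W\circ T}_t$-measurable variable. Measurability is checked by approximating $\sigma$ from above with dyadic values and using path continuity. Since the cylinder coordinates generate the path $\sigma$-algebra, the whole random element $(T_t,(W_{s\wedge T_t})_{s\ge0})$ is $\cF^{W\circ T}_t$-measurable.

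The main obstacle is Step 1, identifying $T_t$ measurably from the path of $W\circ T$. Lemma \ref{lemma:local martingale} gives the local martingale property only with respect to $\cH_{T_\cdot}$. I would therefore compute the quadratic variation there, as the limit of realized variance. This limit is intrinsic to the path, so it is valid in any filtration. I would then verify $[W^1\circ T]=[W^1]\circ T=T$ using continuity of $T$, via Proposition \ref{prop:change of var}(2) applied to $(W^1)^2$.
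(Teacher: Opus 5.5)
Your proposal is correct and follows essentially the paper's argument: $T_t=[W^1\circ T]_t$ gives the measurability of $T$, and the generalized inverse of $T$ lets you write $W_{r\wedge T_t}$ as $W\circ T$ evaluated at a random time bounded by $t$. The paper uses the stopping time $H_u\wedge t$ where you use $\sigma(r\wedge T_t)$; the two give the same value, and your realized-variance and change-of-variable justification of $[W^1\circ T]=T$ fills in a step the paper takes for granted.
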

\begin{proof}
    Write $\widetilde W=W\circ T$ and, for $u\geq0$, define
    $H_u:=\inf\{s\geq0:T_s\geq u\}$.
    Since $T_t=[\widetilde W^1]_t$ (where $\widetilde W^1$ denotes the first component of $\widetilde W$), the process $T$ is adapted
    to $\cF^{\widetilde W}$. By continuity and monotonicity of $T$, we obtain
    $\{H_u\leq t\}=\{T_t\geq u\}$, so $H_u$ is an
    $\cF^{\widetilde W}$-stopping time. Moreover, for any fixed $u\geq0$, we have
    \[
    W_{u\wedge T_t}=\widetilde W_{H_u\wedge t}
    \in\cF_t^{\widetilde W}.
    \]
    Taking rational $u\geq0$ and using continuity proves the
    claimed measurability of the stopped Brownian path.
\end{proof}

So we are finally ready to state and prove our existence and uniqueness result in bounded domains.
\begin{thm}\label{thm:ex-uniq-bounded}
    Let $\cD$ be a bounded $\cC^3$ domain in $\R^d$, $\phi\in\cC^2_b$, $\inf_{x\in\partial\mathcal D}\langle\phi(x),n(x)\rangle>0$, $\mu$ and $\sigma$ bounded and Lipschitz. Then \eqref{eq:TCRSDE} admits a pathwise unique strong solution $(\TZ,\TL)$. In particular, if we denote by $(Z,L)$ the solution to \eqref{eq:TCRSDE} when $T_t=t$, then the solution for a general $T$ can be written explicitly as $(Z\circ T,L\circ T)$.
\end{thm}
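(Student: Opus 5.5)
We plan to prove Theorem~\ref{thm:ex-uniq-bounded} in three stages: existence via the explicit candidate $(Z\circ T,L\circ T)$, strong adaptedness to $\cF^{W\circ T}$, and pathwise uniqueness by a Lions--Sznitman type It\^o estimate.

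\textbf{Existence.} Take the pathwise unique strong solution $(Z,L)$ of the reflected SDE with $T_t=t$, driven by $W$ on the bounded $\cC^3$ domain $\cD$. This solution exists by \cite{LionsSznitman}, and it is $\cH$-adapted because $W$ is an $\cH$-Brownian motion. Define $\TZ:=Z\circ T$ and $\TL:=L\circ T$. These processes are continuous, since $T$ is continuous. The process $\TZ$ takes values in $\overline{\cD}$, and $\TL$ is non-decreasing. Evaluating the equation for $Z$ at the stopping time $T_t$ and applying Proposition~\ref{prop:change of var} gives
\[
\int_0^{T_t}\mu(Z_s)\,ds=\int_0^t\mu(\TZ_s)\,dT_s,\qquad \int_0^{T_t}\phi(Z_s)\,dL_s=\int_0^t\phi(\TZ_s)\,d\TL_s,
\]
and
\[
\int_0^{T_t}\sigma(Z_s)\,dW_s=\int_0^t\sigma(\TZ_s)\,dW_{T_s}.
\]
Since $T$ is continuous, $T_{s-}=T_s$. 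Part~(1) of that proposition, applied to $H=\I(Z\in\partial\cD)$, transfers the support condition on the regulator: $\TL_t=\int_0^t\I(\TZ_s\in\partial\cD)\,d\TL_s$. So $(\TZ,\TL)$ solves \eqref{eq:TCRSDE}, at least as a $(\cH_{T_t})$-adapted solution.

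\textbf{Strong adaptedness.} We must show that $\TZ_t$ and $\TL_t$ are $\cF^{W\circ T}_t$-measurable. Because $(Z,L)$ is a strong solution, there is a measurable functional $F$ with $(Z_s,L_s)_{s\leq u}=F_u\bigl((W_{r})_{r\leq u}\bigr)$ for every $u$. Evaluating at $u=T_t$, the pair $(Z_{T_t},L_{T_t})$ is a measurable function of the pair $(T_t,(W_{s\wedge T_t})_{s\geq0})$. By Lemma~\ref{lemma:measurability}, this pair is $\cF^{W\circ T}_t$-measurable.

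The delicate point is to make the functional composition rigorous. Concretely, we would use a Yamada--Watanabe type representation $Z=F(W)$ with $F$ non-anticipative on path space, so that $Z_u=F_u(W_{\cdot\wedge u})$. Joint measurability in $(u,\omega)$ then allows the random time $u=T_t$ to be plugged in.

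\textbf{Uniqueness.} Let $(\TZ',\TL')$ be another solution driven by $(T,W\circ T)$. We reproduce the Lions--Sznitman argument with the continuous local martingale $W\circ T$; by Lemma~\ref{lemma:local martingale} it is a local martingale with quadratic variation $T$. Take the matrix $A$ and the function $\Phi$ from Lemma~\ref{lemma:oblique}, and apply It\^o's formula to
\[
e^{-\lambda(\Phi(\TZ_t)+\Phi(\TZ'_t))}\inn{A(\TZ_t)(\TZ_t-\TZ'_t),\TZ_t-\TZ'_t}.
\]
The boundary terms become non-positive once $\lambda$ is chosen large relative to $C_0/\alpha$. The remaining $dT$ and $d[W\circ T]=dT\,I$ terms are bounded, via the Lipschitz property of $\mu$ and $\sigma$, by $C\norm{\TZ-\TZ'}^2\,dT$. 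After localization, taking expectations gives
\[
\E\, e^{-KT_{t\wedge\tau}}\norm{\TZ_{t\wedge\tau}-\TZ'_{t\wedge\tau}}^2\leq0.
\]
This is a Gronwall argument in the clock $T$, which we would carry out by inserting the factor $e^{-KT_t}$ into the It\^o formula. It yields $\TZ=\TZ'$, and then $\TL=\TL'$ follows from the equation via $\Phi$.

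I expect this uniqueness step, namely the detailed It\^o computation with a random clock (relegated to the appendix), to be the main obstacle. The adaptedness verification is the second subtle point.
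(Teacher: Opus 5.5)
Your existence and uniqueness arguments are the paper's. The paper also time-changes the Lions--Sznitman solution via Proposition~\ref{prop:change of var}. For uniqueness it applies It\^o's formula to $\gamma(t)$ in \eqref{eq:gamma} and shows that $\gamma(t)e^{-\beta T_t}$ is dominated by a local martingale, which is your Gronwall-in-the-clock device. Your non-symmetrised form with $A(\TZ_t)$ alone also works, since the $d\TL'$ term is absorbed through the Lipschitz continuity of $A$.

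The real difference is the adaptedness step, and there your justification is imprecise. Substituting $u=T_t$ into a path functional needs more than joint measurability. You need $Z_u=F_u(W_{\cdot\wedge u})$ for all $u$ off a \emph{single} null set, with $F_u$ factoring through the stopped path for every $w$. A Yamada--Watanabe map is only determined Wiener-a.e., and stopped paths form a Wiener-null set. Doob--Dynkin gives the identity only for each fixed $u$, with a $u$-dependent null set. Since $T_t$ may depend on $W$, the fixed-time statement cannot simply be evaluated at $u=T_t$.

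The paper sidesteps this. It uses the fixed-time representation $(Z_u,L_u)=h_u(W_{\cdot\wedge u})$ only at the countably many dyadic times, and sets $T^{(n)}_t=2^{-n}\lfloor2^nT_t\rfloor\uparrow T_t$. It then writes $(Z_{T^{(n)}_t},L_{T^{(n)}_t})=\sum_k\I(T^{(n)}_t=k2^{-n})h_{k2^{-n}}(W_{\cdot\wedge k2^{-n}})$. Measurability follows from Lemma~\ref{lemma:measurability}, because on $\{T^{(n)}_t=k2^{-n}\}$ one has $k2^{-n}\le T_t$. Finally it passes to the limit by continuity of $(Z,L)$.

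Your route is fine if you cite the standard fact that a continuous $\cF^W$-adapted process is indistinguishable from one that is progressive for the raw canonical filtration. However, the natural proof of that fact is exactly this dyadic construction, so the paper's argument is the self-contained form of yours.
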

\begin{proof}
    \underline{Uniqueness}. Let $(\TZ,\TL),(\TZ',\TL')$ be two solutions to \eqref{eq:TCRSDE}. Let $A$ and $\Phi$ be as in Lemma \ref{lemma:oblique} and define
    \begin{equation}\label{eq:gamma}
        \gamma(t)=\text{exp}\graf{-\lambda(\Phi(\TZ_t)+\Phi(\TZ_t'))}(\TZ_t-\TZ_t')^T(A(\TZ_t)+A(\TZ_t'))(\TZ_t-\TZ_t').
    \end{equation}
    Then, by slightly generalizing the first inequality in \cite[Proposition 4.1]{LionsSznitman} we obtain
    \begin{equation}\label{eq:uniqueness inequality}
        \gamma(t)\leq M_t+C_1\int_0^t|\TZ_s-\TZ_s'|^2\text{exp}\graf{-\lambda(\Phi(\TZ_s)+\Phi(\TZ_s')}dT_s,
    \end{equation}
    where $M$ is a local martingale, and $\lambda,C_1>0$ are two fixed constants. 
    The proof of \eqref{eq:uniqueness inequality} is just a long and tedious application of the It\^o formula, and can be found it in the appendix at the end of the paper.
    
    Now, define $\eta(t)=\gamma(t)e^{-\beta T_t}$, which is such that $\eta(t)\geq0$ and $\eta(0)=\gamma(0)=0$, thus if we show that for an appropriate choice of $\beta$, $\eta$ it is a local supermartingale, then $\eta(t)=0$ for all $t\geq0$ a.s. and therefore $\TZ$ and $\TZ'$ are indistinguishable. By applying the product rule and using \eqref{eq:uniqueness inequality} we get
    \begin{align*}
        \eta(t)=&\int_0^te^{-\beta T_s}d\gamma(s)-\beta\int_0^te^{-\beta T_s}\gamma(s)dT_s\\
        \leq&\int_0^te^{-\beta T_s}dM_s+\\
        \quad&+\int_0^te^{-\beta T_s}\parent{C_1|\TZ_s-\TZ_s'|^2\text{exp}\graf{-\lambda(\Phi(\TZ_s)+\Phi(\TZ_s')}-\beta\gamma(s)}dT_s
    \end{align*}
    Now notice that by the properties of $A(x)$ in Lemma \ref{lemma:oblique}, we have
    $$(\TZ_t-\TZ_t')^T(A(\TZ_t)+A(\TZ_t'))(\TZ_t-\TZ_t')\geq2\nu'|\TZ_t-\TZ_t'|^2$$
    Therefore
    \begin{align*}
        C_1|\TZ_s-\TZ_s'|^2&\text{exp}\graf{-\lambda(\Phi(\TZ_s)+\Phi(\TZ_s')}-\beta\gamma(s)\\
        &\leq |\TZ_s-\TZ_s'|^2\text{exp}\graf{-\lambda(\Phi(\TZ_s)+\Phi(\TZ_s')}(C_1-2\nu'\beta)
    \end{align*}
    So choosing $\beta>C_1/(2\nu')$ we get a negative drift term, thus
    $$\eta(t)\leq\int_0^te^{-\beta T_s}dM_s=N_t$$
    Clearly $N$ is still a continuous local martingale, thus there exists $(\tau_n)_n$ such that $\tau_n\uparrow\infty$
    $$0\leq\E[\eta(t)]\leq\liminf_{n\to\infty}\E[\eta(t\wedge\tau_n)]\leq\liminf_{n\to\infty}\E [N_{t\wedge\tau_n}]=N_0=0\quad\forall\,t\geq0$$
    Therefore $\eta(t)=0$ a.s. for all $t\geq0$, and then using the continuity of $\eta$ we conclude that $\eta\equiv0$ a.s., which implies that $\TZ$ and $\TZ'$ are indistinguishable. Now, by the indistinguishability of $\TZ,\TZ'$
    $$\int_0^t\phi(\TZ_s)d(\TL-\TL')_s=0,\quad \forall\,t\geq0,\ a.s.$$
    Moreover, by \eqref{eq:vector field} we have $\phi(x)\neq0$ for all $x\in\partial\cD$ we must have $\cL_t=c+\cL_t'$ for all $t\geq0$ a.s. In particular $c=0$ since $\cL_0=\cL'_0=0$.

    \smallskip
    \noindent \underline{Existence}. Consider any probability space, let $\cH$ be a filtration such that $T$ is a $\cH$-time change and $W$ is an $\cH$-Brownian motion. Let $T_t=t$, then according to \cite{LionsSznitman}, Equation \eqref{eq:TCRSDE} admits a strong solution $(Z,L)$, which is adapted to $\cF^W$, and thus, also to the larger filtration $\cH$.
    
    Now by taking an arbitrary $\cH$-time change $T$, we see that, the processes $(Z\circ T,L\circ T)$ satisfy
    \begin{equation}\label{eq:solution}
    \begin{split}
        Z_{T_t} =& z+\int_0^{T_t}\mu(Z_{s})ds+\int_0^{T_t}\sigma(Z_{s})dW_{s}+\int_0^{T_t}\phi(Z_{s})dL_s\\
        =&z+\int_0^t\mu(Z_{T_s})dT_s+\int_0^t\sigma(Z_{T_s})dW_{T_s}+\int_0^t\phi(Z_{T_s})dL_{T_s},\\
        L_{T_t}=&\int_0^{T_t}\I\graf{Z_s\in\partial\cD}dL_s=\int_0^{t}\I\graf{Z_{T_s}\in\partial\cD}dL_{T_s},
    \end{split}
    \end{equation}
    where we used change of variable formulas in Proposition \ref{prop:change of var}. Notice that, by Remark \ref{rem:stopped filtration}, $(Z\circ T,L\circ T)$ are $(\cH_{T_t})$-adapted, but $\cF^{W\circ T}\subset \cH_{T_{\cdot}}$, as shown in the proof of Lemma \ref{lemma:Stricker2}, so to confirm that $(Z\circ T,L\circ T)$ is a strong solution, we need to verify that they are adapted to $\cF^{W\circ T}$.
    
    First of all, notice that the fact that $(Z_t,L_t)\in\cF_t^W$ for every $t\geq0$, implies (see for example \cite[Lemma 1.14]{kallenberg2021foundations}) that for each fixed $t\geq0$, there exists a Borel measurable function $h_t:\cC([0,\infty),\R^d)\to\R^d\times\R_+$, such that
    \begin{equation}\label{eq:a.s.}
        (Z_t,L_t)=h_t((W_{t\wedge s})_{s\geq0})\quad a.s.
    \end{equation}
    Now define
    $$T^{(n)}_t=\frac{\lfloor 2^nT_t\rfloor}{2^n}\in 2^{-n}\N,\ \text{which is such that }T_t^{(n)}\leq T_t,\ T_t^{(n)}\uparrow T_t.$$
    Now notice that, almost surely (in particular everywhere except on the countable union, over $k$, of null-sets where \eqref{eq:a.s.} does not hold), it holds
    $$(Z_{T^{(n)}_t},L_{T^{(n)}_t})=\sum_{k=0}^{\infty}\I(T_t^{(n)}=k/2^n)h_{k/2^n}\parent{\parent{W_{s\wedge k/2^n}}_{s\geq0}}\in\cF_t^{W\circ T}.$$
    Where the measurability follows by Lemma \ref{lemma:measurability}, and the fact that $T^{(n)}_t\leq T_t$, and that products and sums of measurable function is still measurable. Finally by the continuity of $(Z,L)$ we can take the limit which is still measurable,
    $$(Z_{T_t},L_{T_t})=\limn(Z_{T^{(n)}_t},L_{T^{(n)}_t})\in\cF^{W\circ T}_t$$
    and this holds for each $t\geq0$.
\end{proof}

\begin{rem}\label{rem:existence and uniqueness}
    To prove existence we show that the time change of the solution to the reflection problem without time-change is a solution. Therefore this works whenever the problem without time-change admits a strong solution, even with weaker assumptions on the domain, the coefficients and the vector field.  
\end{rem}

Since we are allowing oblique reflection in an unbounded domain, the solution to \eqref{eq:TCRSDE} might explode. The procedure developed in \cite{Mijatovic1}, can be easily extended, applying some obvious modifications, so that it holds in a more general setting. Equip $\cD\cup\{\partial\}$ with the one-point
compactification topology. Thus, for sequences in $\cD$,
$z_n\to\partial$ if and only if $|z_n|\to\infty$. We define $\cE(f):=\inf\graf{t\in\R_+:f(t)=\partial}$, where $\partial$ is the state accounting for explosion, and we consider the space
$$\overline{\cC}:=\graf{f\in\cC(\R_+,\cD\cup\graf{\partial}):f(t)=\partial,\ \forall\,t\geq\cE(f)}$$
endowed with the compact-open topology. Such space can be interpreted as the set of all possible explosive trajectories. It is easy to show that
$$\cE(f)=\lim_{r\to\infty}S_r(f),\quad S_r(f):=\inf\graf{t\in\R_+:\abs{f(t)}\geq r}.$$
The definition of strong solution up to explosion time given in \cite[Appendix A]{Mijatovic1}, still works for \eqref{eq:TCRSDE}. Thus, to prove Theorem \ref{thm:existence and uniqueness}, we proceed in three steps: first we show that there exists unique a couple $(\TZ,\TL)$, such that for any $r>0$
\begin{equation}\label{eq:stopped_TCRSDE}
\begin{split}
    \TZ_{t\wedge\sigma_r}=& z+\int_0^{t\wedge\sigma_r} \mu(\TZ_{s})dT_s+\int_0^{t\wedge\sigma_r}\sigma(\TZ_{s})dW_{T_s}+\int_0^{t\wedge\sigma_r}\phi(\TZ_s)d\TL_s,\\
    \TL_{t\wedge\sigma_r}=&\int_0^{t\wedge\sigma_r}\I(\TZ_s\in\partial\cD)d\TL_s, \quad\text{where }\sigma_r:=S_r(\TZ). 
\end{split}
\end{equation}
Then to show that $\TZ$ has trajectories a.s. in $\overline{\cC}$ we consider first the case $T_t=t$, and finally apply a change of variable to extend to the general case.
\begin{proof}[Proof of Theorem \ref{thm:existence and uniqueness} and Corollary \ref{cor:rep of sol}]
    (I) Let $(\cD^{(n)})_n$, $(\phi^{(n)})_n$ as in \eqref{eq:domain exhaustion1}, and $\cD_r:=B_r\cap\cD, \cD_r^{(n)}:=B_r\cap\cD^{(n)}$. 
    Consider the reflection problem of $\cD^{(n)}$ with vector field $\varphi^{(n)}$. By Theorem \ref{thm:ex-uniq-bounded}
    \begin{equation}\label{eq:bounded_TCRSDE}
    \begin{split}
        \TZ_{t}^{(n)}=& z+\int_0^{t} \mu(\TZ_{s}^{(n)})dT_s+\int_0^{t}\sigma(\TZ_{s}^{(n)})dW_{T_s}+\int_0^{t}\phi^{(n)}(\TZ_s^{(n)})d\TL_s^{(n)},\\
        \TL_{t}^{(n)}=&\int_0^{t}\I(\TZ_s^{(n)}\in\partial\cD^{(n)})d\TL_s^{(n)}, 
    \end{split}
    \end{equation}
    admits a unique solution $(\TZ^{(n)},\TL^{(n)})$; in particular, let $\sigma^{(k)}_r:=S_r(\TZ^{(k)})$ and notice that for any $n\geq N_r$ we have that $(\TZ^{(n)}_{\cdot\wedge\sigma_r^{(n)}},\TL^{(n)}_{\cdot\wedge\sigma_r^{(n)}})$ satisfies
    \begin{equation}\label{eq:bounded stopped TCRSDE}
    \begin{split}
        \TZ_{t\wedge\sigma_r^{(n)}}^{(n)}=& z+\int_0^{t\wedge\sigma_r^{(n)}} \mu(\TZ_{s}^{(n)})dT_s+\int_0^{t\wedge\sigma_r^{(n)}}\sigma(\TZ_{s}^{(n)})dW_{T_s}+\int_0^{t\wedge\sigma_r^{(n)}}\phi(\TZ_s^{(n)})d\TL_s^{(n)},\\
        \TL_{t\wedge\sigma_r^{(n)}}^{(n)}=&\int_0^{t\wedge\sigma_r^{(n)}}\I(\TZ_s^{(n)}\in\partial\cD)d\TL_s^{(n)}, 
    \end{split}
    \end{equation}
    where we are using the fact that $\phi^{(n)}(z)=\phi(z)$ and $\I(z\in\partial\cD^{(n)})=\I(z\in\partial\cD)$ for $z\in B_r\cap\partial \cD^{(n)}$ by \eqref{eq:domain-def}, and $\TZ_{s}^{(n)}\in\cD_r$ for $s\leq\sigma_r^{(n)}$. By uniqueness of the solution to \eqref{eq:bounded_TCRSDE}, it follows the uniqueness for \eqref{eq:bounded stopped TCRSDE} and thus 
    \begin{equation}\label{eq:stopped solutions}
        \parent{\TZ^{(n)}_{\cdot\wedge\sigma_r},\TL^{(n)}_{\cdot\wedge\sigma_r}}=\parent{\TZ^{(m)}_{\cdot\wedge\sigma_r},\TL^{(m)}_{\cdot\wedge\sigma_r}},\quad \forall\,m\geq n\geq N_r,
    \end{equation}
    where $\sigma_r:=\lim_{n\to\infty}\sigma^{(n)}_r$, and we have $\sigma_r=\sigma^{(n)}_r$ for all $n\geq N_r$. In other words, the solutions relative to two successive approximation of the domain are exactly the same when stopped upon exiting a ball small enough, such that $\cD_r^{(n)}=\cD_r^{(m)}=\cD_r$. Thus, we can define $\cT_{\cE}:=\lim_{r\to\infty}\sigma_r$ (notice that $\sigma_r$ is a stopping time with respect to the natural filtration of $\TZ$, thus $\cT_{\cE}$ is also a stopping time with respect to the same filtration) and
    \begin{equation}\label{eq:limit stopped}
        \TZ_t:=\begin{cases}\lim_{r\to\infty}\TZ^{(N_r)}_{t\wedge\sigma_r},\quad t<\cT_{\cE}\\\partial\quad t\geq\cT_{\cE}\end{cases}\qquad\TL_t:=\lim_{r\to\infty}\TL_{t\wedge\sigma_r}^{(N_r)},\quad t<\cT_{\cE}
    \end{equation}
    moreover, since $\TL_t$ is non-decreasing, we can define $\TL_{\cT_{\cE}}:=\lim_{t\to\cT_{\cE}}\TL_t$ and $\TL_t=\TL_{\cT_{\cE}}$ for $t\geq\cT_{\cE}$. The limits \eqref{eq:limit stopped} are well defined since the sequence is eventually constant by \eqref{eq:stopped solutions}. In particular, let $t<\cT_{\cE}$, then there exists $r(t)$ such that $t<\sigma_r$ for all $r\geq r(t)$, and one has $\TZ_{t}=\TZ^{(n)}_{t}$ and $\TL_{t}=\TL^{(n)}_{t}$, for all $n\geq N_{r}$. Thus, by stopping we have 
    \begin{equation}\label{eq:stopped equality}
        \TZ_{t\wedge\sigma_r}=\TZ^{(n)}_{t\wedge\sigma_r}\qquad\TL_{t\wedge\sigma_r}=\TL^{(n)}_{t\wedge\sigma_r} \quad\forall\,n\geq N_r\text{ and }r>0,
    \end{equation}
    thus upon substituting in \eqref{eq:bounded stopped TCRSDE} we have that $(\TZ,\TL)$ satisfies \eqref{eq:stopped_TCRSDE} and it is unique. In particular, it is clear from the construction and from Theorem \ref{thm:ex-uniq-bounded} that $(\TZ,\TL)=(Z\circ T,L\circ T)$, where $(Z,L)$ are the processes obtained for $T_t=t$. Moreover we have that  
    $$\sigma_r=\inf\graf{t\geq0:|\TZ_t|\geq r}=\inf\graf{t\geq0:T_t=\hat{\sigma}_r}=T^{-1}_{\hat{\sigma}_r-},$$
    where $\hat{\sigma}_r$ denotes the exit time for $Z$ and  $T^{-1}_t:=\inf\graf{s\geq0:T_s>t}$ is the right-inverse of $T$, which is right-continuous. Therefore, taking limits we see that the explosion time can be expressed as $\cT_{\cE}=T^{-1}_{\taue-}$.\\
    (II) Now assume $T_t=t$, and denote $(\TZ,\TL,\cT_{\cE})=(Z,L,\taue)$. To show that the trajectories of $Z$ are in $\overline{\cC}$ we just need to show continuity at the explosion time, i.e. $Z_t\to\partial$ as $t\uparrow\taue$. Notice that such limit holds if and only if $\lim_{t\to\taue}\abs{Z_t}^2=\infty$. By It\^o formula we have
    $$\abs{Z_t}^2=|z|^2+2\int_0^{t}\inn{Z_s,\mu(Z_s)}ds+2\int_0^t\inn{Z_s,\sigma(Z_s)dW_s}+2\int_0^t\inn{Z_s,\phi(Z_s)}dL_s+\int_0^t\trace\Sigma(Z_s)ds.$$
    Consider the submartingale $(V_{t\wedge\sigma_r})_{t\geq0}$, defined by
    \begin{equation}\label{eq:V}
        V_{t\wedge\sigma_r}=|z|^2+m_{t\wedge\sigma_r}+2\int_0^{t\wedge\sigma_r}\parent{\inn{Z_s,\mu(Z_s)}^++\trace\Sigma(Z_s)}ds+2\int_0^{t\wedge\sigma_r}\inn{Z_s,\phi(Z_s)}^+dL_s,
    \end{equation}
    where $m_{t\wedge\sigma_r}:=2\int_0^{t\wedge\sigma_r}\inn{Z_s,\sigma(Z_s)dW_s}$ is a true martingale since
    $$\E[m]_{t\wedge\sigma_r}\leq 4\int_0^{t\wedge\sigma_r}Z_s^{\tra}\Sigma(Z_s)Z_sds\leq4r^2\norm{\Sigma}_{op}t.$$
    In particular, $|Z_{t\wedge\sigma_r}|^2\leq V_{t\wedge\sigma_r}$, and using Doob's maximal inequality we get that
    \begin{align}
        \Pb_z(\sigma_{2r}\leq t)=&\Pb_z\parent{\sup_{0\leq s\leq t\wedge\sigma_{2r}}|Z_s|^2\geq 4r^2}\notag
        \leq\Pb_z\parent{\sup_{0\leq s\leq t\wedge\sigma_{2r}}V_s\geq 4r^2}\notag\\
        \leq&\frac{1}{4r^2}\E_zV_{t\wedge\sigma_{2r}}\label{eq:V ineq1}
    \end{align}
    Now notice that by \eqref{eq:V} and the boundedness of $\mu$ and $\phi$ we have
    \begin{align}
        \E_zV_{t\wedge\sigma_{2r}}\leq&|z|^2+4rC(t+\E_zL_{t\wedge\sigma_{2r}})\label{eq:V ineq2}
    \end{align}
    We can show that for $t\downarrow0$ the expectation of the local time goes to 0. Indeed, notice that by \eqref{eq:stopped equality} we have $L_{t\wedge\sigma_{2r}}=L^{(m)}_{t\wedge\sigma_{2r}}$ for $m$ sufficiently big, where $L^{(m)}$ is the local time relative to the bounded domain $\cD^{(m)}$. Let $N:\R^d\to\R$ be a function coinciding with the signed distance from $\partial\cD^{(m)}$, in a tubular neighborhood of such set, and recall $\cD_{2r}\cap\partial\cD^{(m)}=\cD_{2r}\cap\partial\cD$; it is known (e.g. \cite{gilbarg2013elliptic}) that $N$ can be taken in $\cC^2(\R^d)$ and it is such that $\nabla N(z)=n(z)$ for $z\in\cD_{2r}\cap\partial\cD$. Now by It\^o formula we get
    \begin{align*}
        N(Z_{t\wedge\sigma_{2r}})-N(z)&=M_{t\wedge\sigma_{2r}}+\int_0^{t\wedge\sigma_{2r}}\parent{\inn{\nabla N(Z_s),\mu(Z_s)}+\frac{1}{2}\trace(\Sigma(Z_s)H_N(Z_s))}ds+\\
        &\quad+\int_0^{t\wedge\sigma_{2r}}\inn{n(Z_s),\phi(Z_s)}dL_s\\
        &\geq M_{t\wedge\sigma_{2r}}-C_1t+\alpha L_{t\wedge\sigma_{2r}},
    \end{align*}
    where $\alpha:=\inf_{z\in\partial\cD}\inn{\phi(z),n(z)}>0$ by \eqref{eq:vector field}, and we used that 
    \begin{align*}
        \sup_{z\in\cD_{2r}}\big|\inn{\nabla N(z),\mu(z)}+&\frac{1}{2}\trace(\Sigma(Z_s)H_N(Z_s))\big|\\ &\leq \sup_{z\in\cD_{2r}}\parent{\abs{\nabla N(z)}\abs{\mu(z)}+\frac{1}{2}\trace(\Sigma(z)H_N(z))}=:C_1<\infty.
    \end{align*}
    Notice that $M_{\cdot\wedge\sigma_{2r}}$ is a true martingale since 
    $$\E[M]_{\cdot\wedge\sigma_{2r}}\leq 2\E\int_0^{t\wedge\sigma_{2r}}\nabla N(Z_s)^{\tra}\Sigma(Z_s)\nabla N(Z_s)ds\leq C_2t,$$
    thus taking expectation we get the following
    $$\E L_{t\wedge\sigma_{2r}}\leq\alpha^{-1}\parent{\E N(Z_{t\wedge\sigma_{2r}})-N(z)+C_1t}.$$
To make the convergence uniform in the starting point, for arbitrary $\varepsilon>0$, replace $N$
by a smooth truncation $N_\varepsilon$ of the signed distance,
with $\nabla N_\varepsilon=n$ on the boundary and
$\sup N_\varepsilon-\inf N_\varepsilon\leq C\varepsilon$.
The same It\^o argument gives
\[
\sup_{z\in\cD_r}\E_z L_{t\wedge\sigma_{2r}}
\leq \alpha^{-1}\bigl(C\varepsilon+C_{r,\varepsilon}t\bigr),
\]
where the constants are independent of $z$.
Hence, since $\varepsilon>0$ was arbitrary, we obtain $$\sup_{z\in\cD_r}\E_z L_{t\wedge\sigma_{2r}}\to0 \quad\text{as $t\to0$.}$$
Therefore one can take $t_0=t_0(r)$ sufficiently small, such that $\E_zL_{t\wedge\sigma_{2r}}\leq1/C$ for all $z\in\cD_r$ and $t$ sufficiently small. Set $t_1=t_0\wedge C^{-1}$, so that
for $r$ sufficiently large~\eqref{eq:V ineq1} and \eqref{eq:V ineq2} imply
    \begin{equation*}
        \Pb_z(\sigma_{2r}\leq t_1)\leq\frac{3}{4}\implies\inf_{z\in\cD_r}\Pb_z(\sigma_{2r}\geq t_1)\geq\frac{1}{4}.
    \end{equation*}
  
    One can then use the same Borel-Cantelli argument used in \cite{Mijatovic1} to conclude that $|Z_t|^2\to\infty$ as $t\to\taue$. This proves that the trajectories of $Z$ are in $\overline{\cC}$. We also show that on $\graf{\taue<\infty}$ the explosion happens because of the local time, i.e. $L_t\to\infty$ as $t\to\taue$, while the remaining part does not explode. To do this we just notice that
    \begin{align*}
        \abs{Z_t}^2\leq&4\parent{\abs{z}^2+\abs{\int_0^t\mu(Z_s)ds}^2+|\cN_t|^2+\abs{\int_0^t\phi(Z_s)dL_s}^2}\\
        \leq&4\abs{z}^2+Ct^2+4|N_t|^2+4\abs{\int_0^t\phi(Z_s)dL_s}^2,
    \end{align*}
    where we use the boundedness of $\mu$ and where $\cN_t:=\int_0^t\sigma(Z_s)dW_s$, which is a true martingale by the boundedness of $\sigma$. As $t\to\taue$ on $\graf{\taue<\infty}$, we see that the only two components that can explode in the right hand side are either $|\cN|^2$ or the local time integral. Using the same localizing argument as in \cite{Mijatovic1} we show that $|\cN_{\taue}|^2<\infty$  on $\graf{\taue<\infty}$, thus by the boundedness of $\phi$ we conclude that $L_{\taue}=\lim_{t\to\taue}L_t=\infty$.
    \\
    (III) For the general case we use that $(\TZ,\TL)=(Z\circ T,L\circ T)$ from point (I) and thus the continuity and the limits in \eqref{eq:limit}, immediately follow from point (II) and the continuity of $T$. 
\end{proof}
\appendix
\section{An estimate for pathwise uniqueness}
\label{app:uniqueness}
\begin{lemma}\label{lemma:appendix}
     Let $\cD$ be a bounded $\cC^3$ domain in $\R^d$, $\phi\in\cC^2_b$ with $\inf_{x\in\partial\cD}\inn{\phi(x),n(x)}>0$, $\mu$ and $\sigma$ bounded and Lipschitz, and let $A,\Phi,\nu',\alpha,C_0$ be as in Lemma \ref{lemma:oblique}. Let $(\TZ,\TL),(\TZ',\TL')$ be two solutions to \eqref{eq:TCRSDE} defined on the same probability space and with the same starting point $z$, and let $\gamma$ be defined as in \eqref{eq:gamma}. Then there exist constants $\lambda,C_1>0$, a continuous local martingale $M$ and a continuous non-decreasing process $K$, with $M_0=K_0=0$, such that
    \begin{equation}\label{eq:appendix}
        \gamma(t)= M_t+C_1\int_0^t|\TZ_s-\TZ_s'|^2\exp\graf{-\lambda(\Phi(\TZ_s)+\Phi(\TZ_s'))}dT_s-K_t,\qquad t\geq0.
    \end{equation}
    In particular, $\gamma(t)\leq M_t+C_1\int_0^t|\TZ_s-\TZ_s'|^2\exp\graf{-\lambda(\Phi(\TZ_s)+\Phi(\TZ_s'))}dT_s$.
\end{lemma}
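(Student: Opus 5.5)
The plan is to adapt the Lions--Sznitman computation of \cite[Proposition 4.1]{LionsSznitman} to the driver $(T,W\circ T)$. Write $\Delta_t:=\TZ_t-\TZ_t'$, $E_t:=\exp\{-\lambda(\Phi(\TZ_t)+\Phi(\TZ_t'))\}$ and $Q_t:=\Delta_t^{\tra}(A(\TZ_t)+A(\TZ_t'))\Delta_t$, so that $\gamma=E\,Q$. Both $\TZ,\TZ'$ are continuous semimartingales driven by $dT$, $dW_{T}$ and the local times. By Lemma~\ref{lemma:local martingale}, $W\circ T$ is a continuous local martingale with $[W^i\circ T,W^j\circ T]_t=\delta_{ij}T_t$. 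Hence every quadratic covariation term in It\^o's formula is an integral against $dT_s$ with integrand bounded by the Lipschitz and boundedness constants of $\mu,\sigma$ and by the $\cC^2_b$ norms of $A,\Phi$. Since $A,\Phi\in\cC^2_b$ by Lemma~\ref{lemma:oblique}, It\^o's product formula applies to $E\,Q$.

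Expanding $d\gamma=E\,dQ+Q\,dE+d[E,Q]$ splits the terms into three groups. The first group collects all $dW_{T}$ terms into $M$. The second group is the $dT$ terms. Using that $\mu,\sigma$ are Lipschitz and $A$ is $\cC^1$, each of these is bounded by $C|\Delta_s|^2E_s\,dT_s$. The only delicate one is $\trace$ of the second derivative of $A$ against $\sigma\sigma$ terms, which has a prefactor $|\Delta|^2$ anyway. The cross terms $(\sigma(\TZ)-\sigma(\TZ'))\,\partial A$ are $O(|\Delta|^2)$ by the Lipschitz property, and the $E$ terms are $\lambda^2 O(|\Delta|^2)$. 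All of these are absorbed into $C_1|\Delta_s|^2E_s\,dT_s$ with $C_1=C_1(\lambda)$.

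The third group is the reflection terms, and this is the main obstacle. The $d\TL$ contribution is
\[
E_s\bigl(2\Delta_s^{\tra}(A(\TZ_s)+A(\TZ_s'))\phi(\TZ_s)+\Delta_s^{\tra}(DA(\TZ_s)\phi(\TZ_s))\Delta_s-\lambda\langle\nabla\Phi(\TZ_s),\phi(\TZ_s)\rangle Q_s\bigr)\,d\TL_s,
\]
together with its symmetric counterpart in $\TZ'$ with $d\TL'$ (the sign flips because the term involves $-\Delta$). Since $\TZ_s\in\partial\cD$ on the support of $d\TL$, one has $A(\TZ_s)\phi(\TZ_s)=n(\TZ_s)$. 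Lemma~\ref{lemma:oblique} then gives $-2\langle A(\TZ_s)\phi(\TZ_s),\TZ_s'-\TZ_s\rangle\le 2C_0|\Delta_s|^2$, so this piece is bounded by $C|\Delta|^2$.

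The term $\Delta^{\tra}A(\TZ_s')\phi(\TZ_s)$ needs more care. I would write $A(\TZ_s')=A(\TZ_s)+O(|\Delta|)$, which gives $O(|\Delta|^2)$ plus the previous term. The $DA$ term is also $O(|\Delta|^2)$. On the other hand, $-\lambda\langle\nabla\Phi,\phi\rangle Q\le-\lambda\alpha\,2\nu'|\Delta|^2$ by \eqref{eq:alpha} and \eqref{eq:nu}. Choosing $\lambda$ large enough that $2\lambda\alpha\nu'$ exceeds all these constants makes the total $d\TL$ and $d\TL'$ contributions nonpositive. Their negatives define the nondecreasing process $K$, with $K_0=0$.

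Collecting the three groups, define $K$ as the negative of all reflection contributions plus the slack $C_1|\Delta|^2E\,dT$ minus the actual $dT$ drift. This slack is nonnegative by the bound on the second group, so $K$ is continuous and nondecreasing and \eqref{eq:appendix} holds exactly. The inequality follows since $K\ge0$.
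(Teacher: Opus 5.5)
Your proposal is correct and follows the same route as the paper's proof. You apply It\^o's product rule to $\gamma=E\,Q$ and bound every $dT$-coefficient by $C_1(\lambda)E_s|\TZ_s-\TZ_s'|^2$. On the supports of $d\TL$ and $d\TL'$ you use $A\phi=n$, the $C_0$-inequality of Lemma~\ref{lemma:oblique}, Lipschitz continuity of $A$, $Q\geq 2\nu'|\TZ-\TZ'|^2$ and $\inn{\nabla\Phi,\phi}\geq\alpha$ to make the reflection coefficients nonpositive for large $\lambda$, and you define $K$ as the resulting slack. Your order of choosing constants is also the right one: first $\lambda$, since the non-$\lambda$ reflection constants do not depend on it, then $C_1=C_1(\lambda)$.
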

\begin{proof}
    Denote $\widetilde W=W\circ T$. Since $[\widetilde W^k,\widetilde W^\ell]_t=\delta_{kl}T_t$, all quadratic
    covariation terms below are contracted against $dT_t$.
    For a matrix-valued function $A$, 
    we apply $\Delta_\Sigma$ entrywise:
    $(\Delta_\Sigma A)_{ij}:=\Delta_\Sigma(A_{ij})=\sum_{k,\ell=1}^d
    \Sigma_{k\ell}\,\partial_{k\ell}A_{ij}$.
    
    Denote $\gamma(t)=E_tQ_t$ where $E_t=e^{-\lambda V_t}$ and $Q_t=D_t^TA_tD_t$ and
    $$V_t=\Phi(\TZ_t)+\Phi(\TZ_t'),\quad D_t=\TZ_t-\TZ_t'\quad A_t=A(\TZ_t)+A(\TZ_t').$$
    Recall that
    \begin{align*}
    d\TZ_t=\mu(\TZ_t)dT_t+\sigma(\TZ_t)d\widetilde W_t+\phi(\TZ_t)d\TL_t\quad d\TZ_t'=\mu(\TZ_t')dT_t+\sigma(\TZ_t')d\widetilde W_t+\phi(\TZ_t')d\TL'_t,
    \end{align*}
    and thus
    \begin{align*}
    dD_t=(\mu(\TZ_t)-\mu(\TZ_t'))dT_t+(\sigma(\TZ_t)-\sigma(\TZ_t'))d\widetilde W_t+\phi(\TZ_t)d\TL_t-\phi(\TZ_t')d\TL'_t.
    \end{align*}
    Therefore by It\^o formula applied entrywise to $A(\TZ_t)$ and $A(\TZ_t')$, using that $\TL,\TL'$ have finite variation, we get
    \begin{align*}
    dA_t=&\parent{\nabla A(\TZ_t)\mu(\TZ_t)+\nabla A(\TZ_t')\mu(\TZ_t')}dT_t\\
    &+\frac{1}{2}\bigl((\Delta_\Sigma A)(\TZ_t)
     +(\Delta_\Sigma A)(\TZ'_t)\bigr)dT_t\\
    &+\inn{\nabla A(\TZ_t),\sigma(\TZ_t)d\widetilde W_t}+\inn{\nabla A(\TZ_t'),\sigma(\TZ_t')d\widetilde W_t}+\\
    &+\nabla A(\TZ_t)\phi(\TZ_t)d\TL_t+\nabla A(\TZ_t')\phi(\TZ_t')d\TL'_t,
    \end{align*}
    where we recall that $\Sigma(z)=\sigma(z)\sigma(z)$. By the product rule (the covariation terms only involve the $d\widetilde W$-parts of $D$ and $A$, hence are absolutely continuous with respect to $dT_t$) it holds that
    \begin{align*}
    dQ_t=&(dD_t^T)A_tD_t+D_t^T(dA_t)D_t+D_t^TA_t(dD_t)+d[D^T,A]_tD_t+d[D^TA,D]_t\\
    =&\quad\Bigg(D_t^T \left( \nabla A(\TZ_t)\mu(\TZ_t) + \nabla A(\TZ'_t)\mu(\TZ'_t)\right) D_t+2 D_t^T A_t (\mu(\TZ_t) - \mu(\TZ'_t))\Bigg)dT_t+ \\
    &+\Bigg(\frac{1}{2}D_t^T
\bigl((\Delta_\Sigma A)(\TZ_t)
     +(\Delta_\Sigma A)(\TZ'_t)\bigr)D_t+ \text{tr}\left(A_t (\sigma(\TZ_t) - \sigma(\TZ'_t))(\sigma(\TZ_t) - \sigma(\TZ'_t))^T\right) +\\
    &\qquad +2 \sum_{i,j=1}^d D_t^i \left[ (\sigma(\TZ_t) - \sigma(\TZ'_t)) \left( \sigma(\TZ_t)^T \nabla A_{ij}(\TZ_t) + \sigma(\TZ'_t)^T \nabla A_{ij}(\TZ'_t) \right) \right]_j \Bigg)dT_t+\\
    &+ \Bigg[ D_t^T \left( \langle \nabla A(\TZ_t), \sigma(\TZ_t) d\widetilde W_t \rangle + \langle \nabla A(\TZ'_t), \sigma(\TZ'_t) d\widetilde W_t \rangle \right) D_t + 2 D_t^T A_t (\sigma(\TZ_t) - \sigma(\TZ'_t)) d\widetilde W_t \Bigg] \\
    &+ \Bigg[ D_t^T (\nabla A(\TZ_t) \phi(\TZ_t)) D_t + 2 D_t^T A_t \phi(\TZ_t) \Bigg] d\TL_t \\
    &+ \Bigg[ D_t^T (\nabla A(\TZ'_t) \phi(\TZ'_t)) D_t - 2 D_t^T A_t \phi(\TZ'_t) \Bigg] d\TL'_t.
    \end{align*}
    Now notice that using the boundedness of $A,\nabla A, D^2A,\mu,\sigma$ and the Lipschitz continuity of $\mu,\sigma$ we get that the terms in front of $dT_t$ are  bounded by $c_1|D_t|^2$ for some $c_1>0$. We can apply again It\^o formula, this time on $V$
    \begin{align*}
    dV_t=&\parent{\inn{\nabla\Phi(\TZ_t),\mu(\TZ_t)}+\inn{\nabla\Phi(\TZ_t'),\mu(\TZ_t')}}dT_t+\\
    &+\frac{1}{2}\parent{\Delta_\Sigma\Phi(\TZ_t)+\Delta_\Sigma\Phi(\TZ_t')}dT_t+\\
    &+\inn{\nabla\Phi(\TZ_t),\sigma(\TZ_t)d\widetilde W_t}+\inn{\nabla\Phi(\TZ_t'),\sigma(\TZ_t')d\widetilde W_t}+\\
    &+\inn{\nabla\Phi(\TZ_t),\phi(\TZ_t)}d\TL_t+\inn{\nabla\Phi(\TZ_t'),\phi(\TZ_t')}d\TL'_t,
    \end{align*}
    and we can easily obtain its quadratic variation process as
    \begin{align*}
    d[V]_t=&\Big(\nabla\Phi(\TZ_t)^T\Sigma(\TZ_t)\nabla\Phi(\TZ_t)+\nabla\Phi(\TZ_t')^T\Sigma(\TZ_t')\nabla\Phi(\TZ_t')\\
    &+2\nabla\Phi(\TZ_t)^T\sigma(\TZ_t)\sigma(\TZ_t')^T\nabla\Phi(\TZ_t')\Big)dT_t.
    \end{align*}
    A further application of It\^o brings the following expression for $E$ 
    \begin{align*}
    dE_t=&-\lambda E_tdV_t+\frac{\lambda^2}{2}E_td[V]_t\\
    =&-\lambda E_t\Big(\inn{\nabla\Phi(\TZ_t),\mu(\TZ_t)}+\inn{\nabla\Phi(\TZ_t'),\mu(\TZ_t')}\Big)dT_t\\
    &+E_t\Bigg(-\frac{\lambda}{2}\parent{\Delta_\Sigma\Phi(\TZ_t)+\Delta_\Sigma\Phi(\TZ_t')}+\\
    &\ \qquad+\frac{\lambda^2}{2}\big(\nabla\Phi(\TZ_t)^T\Sigma(\TZ_t)\nabla\Phi(\TZ_t)+\nabla\Phi(\TZ_t')^T\Sigma(\TZ_t')\nabla\Phi(\TZ_t')\\
    &\qquad\ +2\nabla\Phi(\TZ_t)^T\sigma(\TZ_t)\sigma(\TZ_t')^T\nabla\Phi(\TZ_t')\big)\Bigg)dT_t\\
    &-\lambda E_t\parent{\inn{\nabla\Phi(\TZ_t),\sigma(\TZ_t)d\widetilde W_t}+\inn{\nabla\Phi(\TZ_t'),\sigma(\TZ_t')d\widetilde W_t}}+\\
    &-\lambda E_t\parent{\inn{\nabla\Phi(\TZ_t),\phi(\TZ_t)}d\TL_t+\inn{\nabla\Phi(\TZ_t'),\phi(\TZ_t')}d\TL'_t}.
    \end{align*}
    Using the boundedness of $\Phi,\nabla \Phi,D^2\Phi,\mu,\sigma$ we get that the terms in front of $dT_t$ are bounded by $E_tc_2$ for some constant $c_2$.
    Applying the product rule to $\gamma(t)$ we get 
    $$d\gamma(t)=Q_tdE_t+E_tdQ_t+d[E,Q]_t.$$
    In particular using the boundedness and Lipschitz hypothesis we get that $d[E,Q]_t=\rho_t\,dT_t$ with $|\rho_t|\leq c_3E_t|D_t|^2$.
    Collecting the terms, we can write
    \begin{equation*}
        d\gamma(t)=dM_t+\psi_t\,dT_t+\chi_t\,d\TL_t+\chi'_t\,d\TL'_t,
    \end{equation*}
    where $M$ is the continuous local martingale, with $M_0=0$, collecting all the $d\widetilde W$-integrals coming from $dE_t$ and $dQ_t$, and $\psi,\chi,\chi'$ are the coefficients of $dT,d\TL,d\TL'$ respectively.
    Then using the other bounds stated above, and the fact that $Q_s\leq 2\norm{A}_\infty|D_s|^2$, we get that there exists some $C_1>0$ such that 
    \begin{equation*}
        \psi_t\leq C_1E_t|D_t|^2\quad\text{for all }t\geq0.
    \end{equation*}
    Now the terms driven by $\TL,\TL'$ are respectively
     $$\chi_t=-\lambda E_tQ_t\inn{\nabla\Phi(\TZ_t),\phi(\TZ_t)}+ E_t\parent{D_t^T (\nabla A(\TZ_t) \phi(\TZ_t)) D_t + 2 D_t^T(A(\TZ_t)+A(\TZ_t'))\phi(\TZ_t)},$$
    $$\chi'_t=-\lambda E_tQ_t\inn{\nabla\Phi(\TZ_t'),\phi(\TZ_t')}+ E_t\parent{D_t^T (\nabla A(\TZ_t') \phi(\TZ_t')) D_t - 2 D_t^T(A(\TZ_t)+A(\TZ_t'))\phi(\TZ_t')}.$$
    Since $d\TL_t$ is carried by $\{t:\TZ_t\in\partial\cD\}$ and $d\TL'_t$ by $\{t:\TZ'_t\in\partial\cD\}$, it suffices to bound $\chi_t$ when $\TZ_t\in\partial\cD$ and $\chi'_t$ when $\TZ'_t\in\partial\cD$; in particular we may apply Lemma \ref{lemma:oblique} with $x=\TZ_t\in\partial\cD$, $x'=\TZ'_t\in\cD$ (respectively with the roles exchanged).
    Using Lemma \ref{lemma:oblique} and $Q_t\geq2\nu'|D_t|^2$ we get that $-\lambda E_tQ_t\inn{\nabla\Phi(\TZ_t),\phi(\TZ_t)}\leq-2\lambda\alpha\nu'E_t|D_t|^2$, and also
    \begin{align*}
        D_t^T(A(\TZ_t)+A(\TZ_t'))\phi(\TZ_t)=&(\TZ_t-\TZ_t')^T(2A(\TZ_t)+(A(\TZ_t')-A(\TZ_t)))\phi(\TZ_t)\\
        =&2(\TZ_t-\TZ_t')^TA(\TZ_t)\phi(\TZ_t)+(\TZ_t-\TZ_t')^T(A(\TZ_t')-A(\TZ_t))\phi(\TZ_t)\\
        \leq&2C_0|D_t|^2+k|D_t|^2,
    \end{align*}
    \begin{align*}
        -D_t^T(A(\TZ_t)+A(\TZ_t'))\phi(\TZ_t')=&-(\TZ_t-\TZ_t')^T(2A(\TZ_t')+(A(\TZ_t)-A(\TZ_t')))\phi(\TZ_t')\\
        =&2(\TZ_t'-\TZ_t)^TA(\TZ_t')\phi(\TZ_t')+(\TZ_t'-\TZ_t)^T(A(\TZ_t)-A(\TZ_t'))\phi(\TZ_t')\\
        \leq&2C_0|D_t|^2+k|D_t|^2.
    \end{align*}
    Where we used that $\phi$ is bounded, \eqref{eq:nu}, and the fact that $A\in\cC_b^2$ thus it is Lipschitz continuous. Using again the boundedness of $\nabla A,\phi$ we get $D_t^T (\nabla A(\TZ_t) \phi(\TZ_t)) D_t \leq c|D_t|^2$.
    Thus combining we get, on the supports of $d\TL$ and $d\TL'$ respectively,
    \begin{equation*}
        \chi_t\leq E_t|D_t|^2\big(c+4C_0+2k-2\lambda\nu'\alpha\big),\qquad \chi'_t\leq E_t|D_t|^2\big(c+4C_0+2k-2\lambda\nu'\alpha\big).
    \end{equation*}
    So choosing $\lambda\geq\frac{4C_0+2k+c}{2\nu'\alpha}$ we get $\chi_t\leq0$ on the support of $d\TL$ and $\chi'_t\leq0$ on the support of $d\TL'$. Finally, define
    \begin{equation*}
        K_t:=\int_0^t\parent{C_1E_s|D_s|^2-\psi_s}dT_s-\int_0^t\chi_s\,d\TL_s-\int_0^t\chi'_s\,d\TL'_s.
    \end{equation*}
    All three integrands are non-negative on the support of the corresponding integrator, hence $K$ is continuous, non-decreasing and $K_0=0$. Since $\gamma(0)=0$, integrating the expression for $d\gamma$ gives exactly \eqref{eq:appendix}.
\end{proof}

\section*{Funding}
AM was also supported  the EPSRC grants EP/V009478/1 and EP/W006227/1. FV acknowledges financial support under the National Recovery and Resilience Plan (NRRP), Mission 4, Component 2, Investment 1.1, Call for tender No. 104 published on 2.2.2022 by the Italian Ministry of University and Research (MUR), funded by the European Union – NextGenerationEU– Project Title “Non–Markovian Dynamics and Non-local Equations” – 202277N5H9 - CUP:D53D23005670006 - Grant Assignment Decree No. 973 adopted on June 30, 2023, by the Italian Ministry of University and Research (MUR). FV was partially supported by INDAM (GNAMPA group).

\section*{AI statement}
The main ideas underlying the proofs of the principal results were developed by the authors without AI assistance. During the writing process, AI tools (GPT-5.6 Sol and Claude Sonnet 5) were used for checking technical arguments, identifying relevant references, and correcting spelling and language. 

\bibliographystyle{alpha}

\bibliography{References}

\end{document}